\definecolor{pgray}{gray}{0.8}
\newtheorem{theorem}{Theorem}[section]
\newtheorem{proposition}[theorem]{Proposition}
\newtheorem{lemma}[theorem]{Lemma}
\numberwithin{equation}{section}
\title{\mbox{}}
\begin{document}
\begin{center}
{\bf \Large{
	Spatial-Decay of Solutions to the Quasi-Geostrophic Equation with the Critical and the Super-Critical Dissipation
}}\\
\vspace{5mm}
\vspace{5mm}
{\sc\large
	Masakazu Yamamoto}\footnote{Graduate School of Science and Technology,
	Niigata University,
	Niigata 950-2181, Japan}\qquad
{\sc\large
	Yuusuke Sugiyama}\footnote{Department of Engineering,
	The University of Shiga Prefecture,
	Hikone 522-8533, Japan}
\end{center}
\maketitle
\vspace{-15mm}
\begin{abstract}
The initial value problem for the two dimensional dissipative quasi-geostrophic equation derived from geophisical fluid dynamics is studied.
The dissipation of this equation is given by the fractional Laplacian.
It is known that the half Laplacian is a critical dissipation for the quasi-geostrophic equation.
In this paper, far field asymptotics of solutions are given in the critical and the supercritical cases.
\end{abstract}
%

\section{Introduction}
The quasi-geostrophic equation is derived from the model of geophisical fluid dynamics (see\cite{Cnstntn-Mjd-Tbk}).
Here we consider the following initial value problem:
\begin{equation}\label{qg}
\left\{
\begin{array}{lr}
	\partial_t \theta + (-\Delta)^{\alpha/2} \theta + \nabla \cdot (\theta\nabla^\bot\psi) = 0,
	&
	t > 0,~ x \in \mathbb{R}^2,\\
	(-\Delta)^{1/2}\psi = \theta,
	&
	t > 0,~ x \in \mathbb{R}^2,\\
	\theta (0,x) = \theta_0 (x),
	&
	x \in \mathbb{R}^2,
\end{array}
\right.
\end{equation}
where $0 < \alpha \le 2,~ \nabla^\bot = (-\partial/\partial x_2,\partial/\partial x_1)$ and the initial temperature $\theta_0$ is given as a nonnegative function.
The real valued function $\theta$ denotes the temperature and $\nabla^\bot \psi$ is the velocity.
The quasi-geostrophic equation is important also in the meteorology.
Since the Riesz transforms are in the nonlinear term and the fractional Laplacian leads an anomalous diffusion, the quesi-geostrophic equaion often is associated with the Navier-Stokes flow in the fluid mechanics.
When $1 < \alpha \le 2$, \eqref{qg} is a parabolic-type equation.
Then the smoothing effect guarantees well-posedness and regularity of global solutions in time.
This case is called the subcritical.
On the other hand, the case $\alpha = 1$ and the case $0 < \alpha < 1$ are the critical and the supercritical, respectively.
Even in those cases, existence and uniqueness of regular solutions in scale invariant spaces are discussed.
However some smoothness or smallness for the initial-data is required in order to show well-posedness of global solutions (cf.\cite{Cffrll-Vssur,Ch-L,C-C-W,Crdb-Crdb,J04,J05,K-N-V,Mur06}).
We treat the global regular solution of \eqref{qg} which satisfies
\begin{equation}\label{exist}
	\theta \in C \bigl( [0,\infty), L^1 (\mathbb{R}^2) \cap L^\infty (\mathbb{R}^2) \bigr),
	\quad
	\theta (t,x) \ge 0,
\end{equation}
and
\begin{equation}\label{mass}
	\int_{\mathbb{R}^2} \theta (t,x) dx
	=
	\int_{\mathbb{R}^2} \theta_0 (x) dx
	\quad\text{and}\quad
	\left\| \theta (t) \right\|_{L^\infty (\mathbb{R}^2)}
	\le
	C (1+t)^{-2/\alpha}
\end{equation}
for $t > 0$.
Those properties are confirmed for a smooth and small initial-data.
Moreover, if the initial-data is in $H^\sigma (\mathbb{R}^2)$ for some $\sigma > 2$ and sufficiently small, then
\begin{equation}\label{drbdd}
	\bigl\| (-\Delta)^{\sigma/2} \theta (t) \bigr\|_{L^2 (\mathbb{R}^2)}
	\le
	C (1+t)^{-\frac1\alpha-\frac\sigma\alpha}
\end{equation}
holds for $t > 0$ (see Proposition \ref{tr} in Section 2).
Asymptotic behavior of solutions of an equation of this type as $t \to + \infty$ is discussed in several preceding works (cf. for example \cite{Bl-Dl,Cr,E-Z,F-M,H-K-N,I-K-M,I,KtM_B,K-K,N-Y}).
In this paper we study spatial decay of the solution of \eqref{qg} by employing the following method.
For an unknown function $\varphi$, and a given and bounded function $\Phi$, now we assume that $\| |x|^\mu (\varphi-\Phi) \|_{L^2 (\mathbb{R}^2)} < + \infty$ and $\| |x|^\mu \Phi \|_{L^2 (\mathbb{R}^2)} = + \infty$, where $\mu$ is some positive constant.
Then $\Phi$ draws the spatial-decay of $\varphi$.
This idea firstly is applied to the Navier-Stokes flow and an asymptotic profile of the velocity as $|x| \to + \infty$ is derived (see \cite{Brndls,Brndls-Vgnrn}).
The solution of the quasi-geostrophic equation of subcritical case is estimated by the general theory via \cite{Brndls-Krch} which is developed for parabolic-type equations.
One fulfills that, if $1 < \alpha < 2$, then $\| |x|^\mu (\theta (t) - M G_\alpha (t)) \|_{L^2 (\mathbb{R}^2)} \le C_t$ and $\| |x|^\mu G_\alpha (t) \|_{L^2 (\mathbb{R}^2)} = + \infty$ for $1+\alpha \le \mu < 2 + \alpha$ and $t > 0$, where $M = \int_{\mathbb{R}^2} \theta_0 (x) dx$ and $G_\alpha (t) = \mathcal{F}^{-1} [e^{-t |\xi|^\alpha}]$ is the fundamental solution of $\partial_t \theta + (-\Delta)^{\alpha/2} \theta = 0$.
However this theory is not available for $0 < \alpha \le 1$ since \eqref{qg} is not a parabolic-type in this case.
The theory via \cite{Brndls-Krch} is based on the $L^p$-$L^q$ estimate for a mild solution.
The mild solution of \eqref{qg} is given by the following integral equation:
\[
	\theta (t)
	=
	G_\alpha (t) * \theta_0
	-
	\int_0^t
		\nabla G_\alpha (t-s) * (\theta \nabla^\bot (-\Delta)^{-1/2} \theta) (s)
	ds.
\]
In the subcritical case, $\nabla G_\alpha (t-s)$ in the nonlinear term is integrable in $s \in (0,t)$.
Therefore the $L^p$-$L^q$ estimate for $\nabla G_\alpha$ leads the assertion.
But, in the case $0 < \alpha \le 1$, this term has a singularity.
The goal of this paper is to derive spatial-decay of the solution of \eqref{qg} for $0 < \alpha \le 1$.
The main assertion is published as follows.
\begin{theorem}\label{sun}
Let $0 < \alpha \le 1,~ \sigma > 2,~ q > 2/\alpha,~ \theta_0 \in H^\sigma (\mathbb{R}^2),~ |x| \theta_0 \in L^1 (\mathbb{R}^2)$ and $|x|^2 \theta_0 \in L^2 (\mathbb{R}^2) \cap L^q (\mathbb{R}^2)$.
Assume that the solution $\theta$ of \eqref{qg} fulfills \eqref{exist}, \eqref{mass} and \eqref{drbdd}.
Then
\[
	\bigl\| |x|^2 \left( \theta (t) - M G_\alpha (t) \right) \bigr\|_{L^2 (\mathbb{R}^2)}
	\le
	\left\{
	\begin{array}{lr}
		C \left( \log (2+t) \right)^{3/2},
		&
		\alpha = 1,\\
		C \left( \log (2+t) \right)^{1/2},
		&
		0 < \alpha < 1\,
	\end{array}
	\right.
\]
holds for $t > 0$, where $M = \int_{\mathbb{R}^2} \theta_0 (x) dx$.
\end{theorem}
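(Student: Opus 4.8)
The plan is to pass to the Fourier side, where the weight $|x|^2$ becomes the Laplacian $-\Delta_\xi$; by Plancherel $\||x|^2 g\|_{L^2(\mathbb{R}^2)}=\|\Delta_\xi\hat g\|_{L^2(\mathbb{R}^2)}$, so the theorem is equivalent to a bound for $\|\Delta_\xi W(t,\cdot)\|_{L^2}$ with $W(t,\xi)=\hat\theta(t,\xi)-Me^{-t|\xi|^\alpha}$. Using the mild formulation I split $W=L+N$, where $L(t,\xi)=e^{-t|\xi|^\alpha}(\hat\theta_0(\xi)-M)$ and $N(t,\xi)=-\int_0^t e^{-(t-s)|\xi|^\alpha}\,i\xi\cdot\widehat{\theta u}(s,\xi)\,ds$, with $u=\nabla^\bot\psi=\nabla^\bot(-\Delta)^{-1/2}\theta$. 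For the linear term I would exploit the cancellation $\hat\theta_0(\xi)-M=\hat\theta_0(\xi)-\hat\theta_0(0)$, which vanishes to first order at the origin: the hypothesis $|x|\theta_0\in L^1$ gives $|\hat\theta_0(\xi)-M|\le C|\xi|$ and bounds $\nabla_\xi\hat\theta_0$, while $|x|^2\theta_0\in L^2$ controls $\Delta_\xi\hat\theta_0=-\widehat{|x|^2\theta_0}$ in $L^2$. Expanding $\Delta_\xi L$ by Leibniz, the only low-frequency singularity is $\Delta_\xi e^{-t|\xi|^\alpha}\sim t|\xi|^{\alpha-2}e^{-t|\xi|^\alpha}$; multiplied by the vanishing factor $O(|\xi|)$ it becomes $t|\xi|^{\alpha-1}e^{-t|\xi|^\alpha}$, whose $L^2_\xi$-norm is scale-invariant in $t$ and hence bounded. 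Thus $\|\Delta_\xi L(t,\cdot)\|_{L^2}\le C$, and the logarithms must come from $N$.

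The heart of the proof is the Duhamel term. Applying $\Delta_\xi$ by Leibniz produces three groups of terms, in which derivatives falling on $\widehat{\theta u}$ are read as moments via $\nabla_\xi\widehat{\theta u}=i\widehat{x\,\theta u}$ and $\Delta_\xi\widehat{\theta u}=-\widehat{|x|^2\theta u}$. The symbol factors $\nabla_\xi(i\xi e^{-\tau|\xi|^\alpha})$ and $\Delta_\xi(i\xi e^{-\tau|\xi|^\alpha})$, with $\tau=t-s$, have scale-invariant $L^2_\xi$-norms, so the contributions carrying at most one spatial moment reduce, after using $\|\widehat{\theta u}\|_{L^\infty}\le\|\theta u\|_{L^1}$ and the decay from \eqref{mass} together with the $L^p$-boundedness of the Riesz transforms, to convergent time integrals $\int_0^t(1+s)^{-2/\alpha}\,ds$. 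The genuinely singular piece is $i\xi e^{-\tau|\xi|^\alpha}\cdot\Delta_\xi\widehat{\theta u}=-i\xi e^{-\tau|\xi|^\alpha}\widehat{|x|^2\theta u}$: estimating $|\xi|e^{-\tau|\xi|^\alpha}$ in $L^\infty_\xi$ costs $\tau^{-1/\alpha}$, which is not integrable in $s$ near $s=t$ when $0<\alpha\le1$. This is exactly the singularity flagged in the introduction, so the crux is to avoid it.

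To remove it I would not treat $i\xi$ as a multiplier but return it to physical space as a divergence, so that this piece equals $-G_\alpha(\tau)*\nabla\cdot(|x|^2\theta u)$. Since $u=\nabla^\bot\psi$ is divergence-free, $\nabla\cdot(|x|^2\theta u)=2(x\cdot u)\theta+|x|^2\,u\cdot\nabla\theta$ places no derivative on the nonlocal velocity and only one on $\theta$; because $\|G_\alpha(\tau)\|_{L^1}=1$ the kernel singularity disappears entirely, and the piece is bounded in $L^2$ by $\int_0^t\bigl(\||x|\,\theta u(s)\|_{L^2}+\||x|^2\,u\cdot\nabla\theta(s)\|_{L^2}\bigr)\,ds$. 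Note that although $\||x|^2\theta(s)\|_{L^2}$ is itself infinite (the profile $MG_\alpha$ already saturates this norm), the nonlinear products decay fast enough at spatial infinity that these weighted norms are finite; the remaining task is to bound their time integral, for which I would split $[0,t]$ into $[0,t/2]$ and $[t/2,t]$ and use the higher-derivative decay \eqref{drbdd} on the late interval.

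The main obstacle is the control of these weighted nonlinear quantities involving the nonlocal velocity $u$: because the Riesz transforms do not preserve spatial localization, $\||x|^2\theta u\|_{L^2}$ and $\||x|^2\,u\cdot\nabla\theta\|_{L^2}$ cannot be reduced to weighted norms of $\theta$ alone. I would handle them by a low/high-frequency decomposition of the symbol $\xi^\bot/|\xi|$, estimating the smooth high-frequency part through the moment conditions and the smoothing bound \eqref{drbdd}, and treating the singular low-frequency part through the moment control coming from $|x|\theta_0\in L^1$ and $|x|^2\theta_0\in L^2\cap L^q$. Carrying out the time integration, the exponents balance precisely at the critical dissipation: the borderline integrals $\int_0^t(1+s)^{-1}\,ds\sim\log(2+t)$ appear, and combining them in $L^2_\xi$ via Cauchy--Schwarz in time yields the factor $(\log(2+t))^{1/2}$ in the supercritical range $0<\alpha<1$. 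In the critical case $\alpha=1$ the low-frequency symbol loses an additional logarithm, upgrading the bound to $(\log(2+t))^{3/2}$. Verifying these two borderline balances, and the weighted Riesz estimates that feed them, is where the real work lies.
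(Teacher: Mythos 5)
Your strategy is genuinely different from the paper's: you work on the Fourier side via Plancherel and the Duhamel formula, using the divergence-free structure of $u=\nabla^\bot\psi$ to rewrite the worst term as $G_\alpha(t-s)*\nabla\cdot(|x|^2\theta u)$ and trade the singular factor $(t-s)^{-1/\alpha}$ for $\|G_\alpha(t-s)\|_{L^1}=1$, whereas the paper never returns to Duhamel for the weighted quantity: it sets $v=\theta-G_\alpha*\theta_0$, derives the weighted energy identity \eqref{bs} via the Stroock--Varopoulos inequality (Lemma \ref{lemC-C}), and confines the Fourier-side work to the commutator $[|x|^2,(-\Delta)^{\alpha/2}]v$, which is where its logarithms arise. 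Your linear-part estimate is sound and is essentially Lemma \ref{wtasymplin}. But the nonlinear part has genuine gaps. First, the claim that $\nabla_\xi(i\xi e^{-\tau|\xi|^\alpha})$ has a scale-invariant $L^2_\xi$-norm is false: this symbol contains the pieces $e^{-\tau|\xi|^\alpha}$ and $\tau|\xi|^\alpha e^{-\tau|\xi|^\alpha}$, whose $L^2_\xi$-norms in two dimensions are of order $\tau^{-1/\alpha}$ (only the second-derivative symbol $\Delta_\xi(i\xi e^{-\tau|\xi|^\alpha})$ is scale-invariant in $L^2_\xi$). With your stated pairing (symbol in $L^2_\xi$, moment $\widehat{x\,\theta u}$ in $L^\infty_\xi$), the mixed Leibniz term produces $\int_0^t(t-s)^{-1/\alpha}(\cdots)\,ds$, i.e.\ exactly the non-integrable singularity the whole argument is meant to avoid. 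It can be rescued by the opposite pairing --- symbol in $L^\infty_\xi$, which is bounded uniformly in $\tau$, against $\|\widehat{x\,\theta u}\|_{L^2_\xi}=\|x\,\theta u\|_{L^2_x}$ --- but that forces you into weighted $L^2$ bounds on the nonlinearity, which is the second and more serious gap.

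Every quantity your argument ultimately needs ($\||x|\theta u\|_{L^2}$, $\|x\,\theta u\|_{L^2}$, $\||x|^2 u\cdot\nabla\theta\|_{L^2}$) requires quantitative weighted decay of $\theta$ itself, of the type $\||x|^2\theta(t)\|_{L^q}\le C(1+t)^{2/(\alpha q)}$, together with weighted mapping properties of the Riesz transforms, e.g.\ the decomposition of $|x|^2R_j\theta$ into a bounded kernel acting on $\theta$, a Riesz potential acting on $|x|\theta$, and $R_j(|x|^2\theta)$. None of this follows from the assumed properties \eqref{exist}, \eqref{mass}, \eqref{drbdd}; in the paper it is precisely the content of Proposition \ref{prop-wt} --- proved by a separate weighted-energy/commutator argument, and the only place the hypothesis $q>2/\alpha$, $|x|^2\theta_0\in L^q$ enters --- combined with the kernel decomposition displayed in Section 3. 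You acknowledge the obstruction (``the Riesz transforms do not preserve spatial localization'') but replace its resolution by an unexecuted sketch of a frequency decomposition; this is where the real proof lives, and it is missing. Finally, your account of the logarithms is reverse-engineered rather than derived: once the bounds of Proposition \ref{prop-wt} are in hand and the pairings are corrected, the natural estimates for all of your terms decay like $(1+s)^{-2/\alpha}$, so the time integrals converge and no borderline $\int_0^t(1+s)^{-1}ds$ appears; nothing in your sketch actually produces the factor $(\log(2+t))^{1/2}$, let alone the extra power at $\alpha=1$. A uniform-in-time bound would of course still imply the stated theorem (and the paper itself remarks that the logarithm is probably not essential), but as written your proposal neither establishes that uniform bound nor exhibits the claimed logarithms.
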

Here $M \in \mathbb{R}$ since $\| \theta_0 \|_{L^1 (\mathbb{R}^2)} \le C \| (1+|x|^2) \theta_0 \|_{L^2 (\mathbb{R}^2)} < + \infty$.
We emphasize that $\| |x|^2 G_\alpha (t) \|_{L^2 (\mathbb{R}^2)} = + \infty$ for $t>0$ (see \cite{Blmntr-Gtr}).
Therefore Theorem \ref{sun} states that the decay-rate of $\theta$ as $|x| \to +\infty$ is represented by $M G_\alpha$.
The logarithmic grows may not be crutial since the solution of the linear problem yields that $\| |x|^2 (G_\alpha (t) * \theta_0 - M G_\alpha (t)) \|_{L^2 (\mathbb{R}^2)} \le C$ for $t > 0$ (see Lemma \ref{wtasymplin} in Section 2).\\

\noindent
{\it Notation.}
We define the Fourier transform and its inverse by $\mathcal{F} [\varphi] (\xi) = (2\pi)^{-1} \int_{\mathbb{R}^2} e^{-ix\cdot\xi} \varphi (x) dx$ and $\mathcal{F}^{-1} [\varphi] (x) = (2\pi)^{-1} \int_{\mathbb{R}^2} e^{ix\cdot\xi} \varphi (\xi) d\xi$, where $i = \sqrt{-1}$.
We denote the derivations by $\partial_t = \partial / \partial t,~ \partial_j = \partial / \partial x_j~ (j = 1,2),~ \nabla = (\partial_1,\partial_2),~ \nabla^\bot = (-\partial_2,\partial_1),~ \Delta = \partial_1^2 + \partial_2^2$ and $(-\Delta)^{\alpha/2} \varphi = \mathcal{F}^{-1} [|\xi|^\alpha \mathcal{F} [\varphi]]$.
Also we define $(-\Delta)^{-\sigma/2} \varphi = \mathcal{F}^{-1} [|\xi|^{-\sigma} \mathcal{F} [\varphi]]$ for $0 < \sigma < 2$.
The H\"older conjugate of $1 \le p \le \infty$ is denoted by $p'$, i.e., $\frac1p + \frac1{p'} = 1$.
The Riesz transform is defined by $R_j \varphi = \partial_j (-\Delta)^{-1/2} \varphi = \mathcal{F}^{-1} [i\xi_j |\xi|^{-1} \mathcal{F} [\varphi]]~ (j = 1,2)$.
For $\beta = (\beta_1,\beta_2) \in \mathbb{Z}_+^2 = (\mathbb{N} \cup \{ 0 \})^2,~ |\beta| = \beta_1 + \beta_2$.
For some operators $A$ and $B$, we denote the commutator by $[A,B] = AB - BA$.
Various nonnegative constants are denoted by $C$.

\section{Preliminaries}
In this section we prepare some inequalities for several functions and the solution.
From the scaling property, the fundamental solution fulfills that
\begin{equation}\label{scG}
	G_\alpha (t,x) = t^{-2/\alpha} G_\alpha (1,t^{-1/\alpha} x)
\end{equation}
for $(t,x) \in (0,+\infty) \times \mathbb{R}^2$.
Furthermore
\begin{equation}\label{decayG}
	\bigl| \nabla^\beta G_\alpha (1,x) \bigr|
	\le
	C_\beta (1+|x|^2)^{-1-\frac\alpha{2}-\frac{|\beta|}2}
\end{equation}
is satisfied for $\beta \in \mathbb{Z}_+^2$ and $x \in \mathbb{R}^2$.
When $\alpha = 1$, this estimate is clear since $G_\alpha$ is the Poisson kernel in this case.
For the case $0 < \alpha < 1$, we use the following H\"ormander-Mikhlin inequality.
\begin{lemma}[H\"ormander-Mikhlin inequality\cite{Hrmndr,Mkhln}]\label{lem-HM}
Let $N \in \mathbb{Z}_+,~ 0 < \mu < 1$ and $\lambda = N + \mu - 2$.
Assume that $\varphi \in C^\infty (\mathbb{R}^2\backslash \{ 0 \})$ satisfies the following conditions:
\begin{itemize}
\item
	$\nabla^\gamma \varphi \in L^1 (\mathbb{R}^2)$ for any $\gamma \in \mathbb{Z}_+^2$ with $|\gamma| \le N$;
\item
	$|\nabla^\gamma \varphi (\xi)| \le C_\gamma |\xi|^{\lambda - |\gamma|}$ for $\xi \neq 0$ and $\gamma \in \mathbb{Z}_+^2$ with $|\gamma| \le N+1$.
\end{itemize}
Then
\[
	\sup_{x \neq 0} \bigl( |x|^{2+\lambda} \bigl| \mathcal{F}^{-1} [\varphi] (x) \bigr| \bigr)
	< + \infty
\]
holds.
\end{lemma}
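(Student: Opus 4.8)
The plan is to trade the spatial weight $|x|^{2+\lambda}=|x|^{N+\mu}$ for $\xi$-derivatives of $\varphi$ and then to harvest the leftover fractional decay $|x|^{-\mu}$ through a frequency decomposition tuned to $|x|$. First I would exploit $x^\beta e^{ix\cdot\xi}=(-i)^{|\beta|}\partial_\xi^\beta e^{ix\cdot\xi}$ and integrate by parts to obtain, for every $\beta\in\mathbb{Z}_+^2$ with $|\beta|=N$,
\begin{equation*}
	x^\beta\mathcal{F}^{-1}[\varphi](x)=i^{|\beta|}\mathcal{F}^{-1}[\nabla^\beta\varphi](x),
\end{equation*}
which is legitimate because $\nabla^\gamma\varphi\in L^1(\mathbb{R}^2)$ for all $|\gamma|\le N$, so no boundary terms at infinity remain. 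Combined with the elementary inequality $|x|^N\le C\sum_{|\beta|=N}|x^\beta|$, this reduces the claim to the single estimate $\sup_{x\neq0}\bigl(|x|^\mu|\mathcal{F}^{-1}[\psi](x)|\bigr)<+\infty$ applied to each $\psi=\nabla^\beta\varphi$; for such $\psi$ the hypotheses yield $|\psi(\xi)|\le C|\xi|^{\mu-2}$ and $|\nabla\psi(\xi)|\le C|\xi|^{\mu-3}$, together with $\psi\in L^1(\mathbb{R}^2)$.

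Fixing $x\neq0$ and setting $R=|x|^{-1}$, I would split $\mathcal{F}^{-1}[\psi](x)$ into its low-frequency part over $\{|\xi|<R\}$ and its high-frequency part over $\{|\xi|>R\}$. For the low-frequency piece the crude bound $\int_{|\xi|<R}|\xi|^{\mu-2}\,d\xi=C\int_0^Rr^{\mu-1}\,dr=CR^\mu$ is finite precisely because $\mu>0$, producing a contribution $\le CR^\mu=C|x|^{-\mu}$. For the high-frequency piece I would integrate by parts once using the first-order operator $L=(i|x|^2)^{-1}x\cdot\nabla_\xi$, which satisfies $Le^{ix\cdot\xi}=e^{ix\cdot\xi}$. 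This generates an interior integral governed by $|x|^{-1}|\nabla\psi(\xi)|\le C|x|^{-1}|\xi|^{\mu-3}$, controlled by $C|x|^{-1}\int_R^\infty r^{\mu-2}\,dr=C|x|^{-1}R^{\mu-1}=C|x|^{-\mu}$ (finite precisely because $\mu<1$), and a boundary integral on the circle $|\xi|=R$, controlled by $C|x|^{-1}R^{\mu-2}\cdot R=C|x|^{-\mu}$. Adding the three contributions gives $|\mathcal{F}^{-1}[\psi](x)|\le C|x|^{-\mu}$ for every $x\neq0$, which is the desired single-scale estimate.

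I expect the delicate point to be the integration by parts on the exterior region $\{|\xi|>R\}$, specifically verifying that the outer boundary contribution vanishes and that the surface term on $|\xi|=R$ carries the correct power of $R$. I would make this rigorous by first working on the annulus $\{R<|\xi|<\rho\}$ and letting $\rho\to+\infty$, using $\nabla\psi\in L^1(\{|\xi|>R\})$ and $\psi\in L^1(\mathbb{R}^2)$ to discard the contribution at $|\xi|=\rho$. The constraint $0<\mu<1$ is used in an essential and symmetric way: $\mu>0$ gives convergence at the origin for the low frequencies, $\mu<1$ gives convergence at infinity for the high frequencies, and the balance $R\sim|x|^{-1}$ makes all pieces scale like $|x|^{-\mu}$ at once. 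Feeding $\psi=\nabla^\beta\varphi$ back into the reduction of the first paragraph and summing over $|\beta|=N$ then yields $\sup_{x\neq0}\bigl(|x|^{2+\lambda}|\mathcal{F}^{-1}[\varphi](x)|\bigr)<+\infty$, completing the proof.
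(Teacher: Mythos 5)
Your proof is correct, but it cannot be ``essentially the same as the paper's'' for a simple reason: the paper contains no proof of Lemma \ref{lem-HM} at all --- it only refers the reader to Shibata--Shimizu \cite{S-S} for the proof and the details. What you have produced is therefore a self-contained replacement for that citation, and it follows the standard mechanism behind pointwise decay estimates of this type: trade the integer part $|x|^{N}$ of the weight for $N$ integrations by parts via $x^\beta\mathcal{F}^{-1}[\varphi]=i^{|\beta|}\mathcal{F}^{-1}[\nabla^\beta\varphi]$, then recover the fractional power $|x|^{-\mu}$ by splitting the frequency integral at the dual scale $R=|x|^{-1}$, using the size bound $|\nabla^\beta\varphi(\xi)|\le C|\xi|^{\mu-2}$ below $R$ (convergent precisely because $\mu>0$) and one further integration by parts with $L=(i|x|^2)^{-1}x\cdot\nabla_\xi$ above $R$ (convergent precisely because $\mu<1$). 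Your bookkeeping is right: the low-frequency piece, the interior high-frequency piece and the surface term on $\{|\xi|=R\}$ all come out as $C|x|^{-\mu}$, and your annulus argument with $\rho\to+\infty$ disposes of the outer boundary term correctly, since it is $O(|x|^{-1}\rho^{\mu-1})$ by the pointwise bound on $\psi$. The one step you pass over too quickly is at the opposite end of the first reduction: $\varphi$ may be singular at $\xi=0$, so integrability of $\nabla^\gamma\varphi$ alone does not licence the identity $\mathcal{F}^{-1}[\partial_j f]=-ix_j\mathcal{F}^{-1}[f]$; one must also check that no boundary contribution arises at the origin, i.e.\ that the classical derivatives of $\varphi$ on $\mathbb{R}^2\setminus\{0\}$ coincide with its distributional derivatives on all of $\mathbb{R}^2$. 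This is where the pointwise hypothesis is needed at low frequencies as well: excising $\{|\xi|\le\varepsilon\}$, the resulting boundary terms are $O\bigl(\varepsilon\cdot\varepsilon^{\lambda-|\gamma|}\bigr)$ with $\lambda-|\gamma|+1\ge\mu>0$ for $|\gamma|\le N-1$, hence they vanish as $\varepsilon\to0$, after which the usual $W^{1,1}$ Fourier identity applies and your induction on $|\beta|$ goes through. With that one sentence added, your argument is a complete and correct proof --- and it has the merit of being elementary and self-contained, where the paper simply outsources the lemma to \cite{S-S}.
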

The proof and the details of this lemma is in \cite{S-S}.
We confirm \eqref{decayG} when $0 < \alpha < 1$.
If $|\beta| = 2k$ for $k \in \mathbb{Z}_+$, then we put
$
	\varphi (\xi)
	=
	\nabla (-\Delta)^k ( \xi^\beta e^{-|\xi|^\alpha} ).
$
Then $\varphi$ satisfies the conditions in Lemma \ref{lem-HM} with $N = 1,~ \mu = \alpha$ and $\lambda = \alpha - 1$.
Hence, since $\nabla^\beta G_\alpha (1,x)$ is bounded, we see \eqref{decayG}.
When $|\beta| = 2k+1$, we put $\varphi (\xi) = (-\Delta)^{k+1} (\xi^\beta e^{-|\xi|^\alpha})$ in the above procedure and derive \eqref{decayG}.
The following relation plays important role in the energy method.
\begin{lemma}[Stroock-Varopoulos inequality \cite{L-S}]\label{lemC-C}
	Let $0 \le \alpha \le 2,~ q \ge 2$ and $f \in W^{\alpha,q} (\mathbb{R}^2)$.
	Then
	\[
		\int_{\mathbb{R}^2}
			|f|^{q-2} f (-\Delta)^{\alpha/2} f
		dx
		\ge
		\frac2q \int_{\mathbb{R}^2}
			\left| (-\Delta)^{\alpha/4} (|f|^{q/2}) \right|^2
		dx
	\]
	holds.
\end{lemma}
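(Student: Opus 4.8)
The plan is to reduce the inequality to an elementary pointwise estimate for real numbers, after rewriting both sides through the L\'evy--Khintchine singular-integral representation of the fractional Laplacian. I would first treat the genuinely nonlocal range $0 < \alpha < 2$ and dispose of the endpoints $\alpha = 0, 2$ separately.

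First I would recall that for $0 < \alpha < 2$ there is a constant $c_\alpha > 0$ with
\[
	(-\Delta)^{\alpha/2} f (x)
	=
	c_\alpha \, \mathrm{p.v.} \int_{\mathbb{R}^2} \frac{f(x) - f(y)}{|x-y|^{2+\alpha}} \, dy,
\]
valid first for Schwartz $f$ and then, by density, for $f \in W^{\alpha,q} (\mathbb{R}^2)$. Testing this against $|f|^{q-2} f$ and symmetrizing in $x \leftrightarrow y$ gives, with $\Phi (s) = |s|^{q-2} s$,
\[
	\int_{\mathbb{R}^2} |f|^{q-2} f \, (-\Delta)^{\alpha/2} f \, dx
	=
	\frac{c_\alpha}{2} \int_{\mathbb{R}^2} \int_{\mathbb{R}^2} \frac{\bigl( \Phi (f(x)) - \Phi (f(y)) \bigr) \bigl( f(x) - f(y) \bigr)}{|x-y|^{2+\alpha}} \, dy \, dx.
\]
The same representation, applied with exponent $\alpha/2$ in place of $\alpha$, together with Plancherel's identity yields, with $\Psi (s) = |s|^{q/2}$,
\[
	\int_{\mathbb{R}^2} \bigl| (-\Delta)^{\alpha/4} (|f|^{q/2}) \bigr|^2 \, dx
	=
	\frac{c_\alpha}{2} \int_{\mathbb{R}^2} \int_{\mathbb{R}^2} \frac{\bigl( \Psi (f(x)) - \Psi (f(y)) \bigr)^2}{|x-y|^{2+\alpha}} \, dy \, dx,
\]
the crucial point being that the two kernels coincide because $(-\Delta)^{\alpha/4}$ has order $\alpha/2$ and $2 \cdot \frac{\alpha}{2} = \alpha$.

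With these identities in hand, the lemma follows once I establish the pointwise inequality $\bigl( \Phi (a) - \Phi (b) \bigr) (a-b) \ge \frac{2}{q} \bigl( \Psi (a) - \Psi (b) \bigr)^2$ for all $a, b \in \mathbb{R}$. Assuming without loss of generality that $a \ge b$ (both sides are symmetric under $a \leftrightarrow b$), I would write $\Phi (a) - \Phi (b) = (q-1) \int_b^a |s|^{q-2} \, ds$ and $\Psi (a) - \Psi (b) = \frac{q}{2} \int_b^a |s|^{q/2-1} \mathrm{sgn} (s) \, ds$. The Cauchy--Schwarz inequality then gives $\bigl( \Psi (a) - \Psi (b) \bigr)^2 \le \frac{q^2}{4} (a-b) \int_b^a |s|^{q-2} \, ds$, so that $\frac{2}{q} \bigl( \Psi (a) - \Psi (b) \bigr)^2 \le \frac{q}{2} (a-b) \int_b^a |s|^{q-2} \, ds$, whereas the left-hand side equals $(q-1) (a-b) \int_b^a |s|^{q-2} \, ds$. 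Since $q \ge 2$ forces $q - 1 \ge \frac{q}{2}$ and the common factor $(a-b) \int_b^a |s|^{q-2} \, ds$ is nonnegative, the pointwise inequality holds. Integrating it against the nonnegative kernel $\frac{c_\alpha}{2} |x-y|^{-2-\alpha}$ converts it directly into the asserted integral inequality.

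Finally, I would handle the endpoints separately: for $\alpha = 0$ both sides reduce to multiples of $\int_{\mathbb{R}^2} |f|^q \, dx$ and the inequality is once more $q - 1 \ge \frac{q}{2}$, while for $\alpha = 2$ an integration by parts reduces the claim to $(q-1) \int_{\mathbb{R}^2} |f|^{q-2} |\nabla f|^2 \, dx \ge \frac{q}{2} \int_{\mathbb{R}^2} |f|^{q-2} |\nabla f|^2 \, dx$, the same elementary fact. I expect the main obstacle to be analytic rather than algebraic: rigorously justifying the singular-integral representation, the symmetrization, and the interchange of the principal value with the outer integration for merely $W^{\alpha,q}$ data. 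I would dispose of this by proving every identity first for Schwartz functions and then passing to the limit, using the continuity of $f \mapsto (-\Delta)^{\alpha/2} f$ out of $W^{\alpha,q} (\mathbb{R}^2)$ and of the Nemytskii map $f \mapsto |f|^{q/2}$, together with the nonnegativity of the kernel, which makes the limiting inequality automatic even where strong convergence of the double integrals is delicate.
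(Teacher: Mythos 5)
Your proof is correct. For comparison: the paper does not actually prove this lemma --- it is quoted from the literature, with the proof deferred to the cited works of Cordoba--Cordoba and Ju --- so there is no in-paper argument to measure yours against; what you wrote is essentially the standard proof found in those references: the singular-integral (L\'evy--Khintchine) representation of $(-\Delta)^{\alpha/2}$, symmetrization into a double integral, and the pointwise inequality $\bigl(\Phi(a)-\Phi(b)\bigr)(a-b)\ge \tfrac2q\bigl(\Psi(a)-\Psi(b)\bigr)^2$ obtained from Cauchy--Schwarz, with the endpoint cases handled separately. Two minor remarks. First, the phrase ``the same representation, applied with exponent $\alpha/2$ in place of $\alpha$'' is misleading as written: applying the representation directly to the operator of order $\alpha/2$ would produce the kernel $|x-y|^{-2-\alpha/2}$, which is not what you want; what you actually use --- and what your displayed identity correctly states --- is Plancherel, $\bigl\|(-\Delta)^{\alpha/4}g\bigr\|_{L^2}^2=\int_{\mathbb{R}^2} g\,(-\Delta)^{\alpha/2}g\,dx$, followed by the order-$\alpha$ representation, giving the kernel $|x-y|^{-2-\alpha}$ with the constant $c_\alpha/2$. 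Second, your Cauchy--Schwarz computation in fact yields the sharper constant $\tfrac{4(q-1)}{q^2}$; the step $q-1\ge q/2$ is exactly where this is relaxed to the stated constant $\tfrac2q$, which is legitimate precisely because $q\ge2$.
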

For the proof of this lemma, see \cite{Crdb-Crdb,J05}.
The fractional integral $(-\Delta)^{-\sigma/2} \varphi$ for $0 < \sigma < 2$ is defined by $(-\Delta)^{-\sigma/2} \varphi = \mathcal{F}^{-1} [|\xi|^{-\sigma} \mathcal{F} [\varphi]]$ and represented by
\begin{equation}\label{fracint}
	(-\Delta)^{-\sigma/2} \varphi (x)
	=
	\gamma_\sigma \int_{\mathbb{R}^2}
		\frac{\varphi(y)}{|x-y|^{2-\sigma}}
	dy
\end{equation}
for some constant $\gamma_\sigma$ (see \cite{S,Z}).
For this integral we see the following inequality of Sobolev type.
\begin{lemma}[Hardy-Littlewood-Sobolev's inequality \cite{S,Z}]\label{HLS}~
	Let $0 < \sigma < 2,~ 1 < p < \frac{2}{\sigma}$ and $\frac1{p_*} = \frac1p - \frac{\sigma}2$.
	Then there exists a positive constant $C$ such that
	\[
		\bigl\| (-\Delta)^{-\sigma/2} \varphi \bigr\|_{L^{p_*} (\mathbb{R}^2)}
		\le
		C \bigl\| \varphi \bigr\|_{L^p (\mathbb{R}^2)}
	\]
	for any $\varphi \in L^p (\mathbb{R}^2)$.
\end{lemma}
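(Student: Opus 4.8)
The plan is to recognize Lemma \ref{HLS} as the classical Hardy--Littlewood--Sobolev inequality for the Riesz potential in dimension two and to prove it through Hedberg's pointwise estimate combined with the Hardy--Littlewood maximal inequality, thereby avoiding any interpolation machinery. By the representation \eqref{fracint} it suffices to bound the operator $\varphi \mapsto I_\sigma \varphi (x) = \int_{\mathbb{R}^2} |x-y|^{\sigma-2} \varphi (y)\, dy$ from $L^p (\mathbb{R}^2)$ to $L^{p_*} (\mathbb{R}^2)$, and since the kernel is nonnegative we may assume $\varphi \ge 0$. For each $x$ and each free parameter $R > 0$ I would split $I_\sigma \varphi (x)$ into the near part over $\{ |x-y| < R \}$ and the far part over $\{ |x-y| \ge R \}$, and estimate the two pieces separately before optimizing in $R$.

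For the near part, a dyadic decomposition of the annuli $\{ 2^{-j-1} R \le |x-y| < 2^{-j} R \}$ together with the definition of the Hardy--Littlewood maximal function $M\varphi$ gives, using the local integrability $\int_{|z| < R} |z|^{\sigma-2}\, dz = C R^\sigma < +\infty$ (which holds because $\sigma > 0$), the bound $\int_{|x-y| < R} |x-y|^{\sigma-2} \varphi (y)\, dy \le C R^\sigma M\varphi (x)$. For the far part I would apply H\"older's inequality with exponents $p$ and $p'$. The decisive computation is $\int_{|z| \ge R} |z|^{(\sigma-2)p'}\, dz = C R^{(\sigma-2)p'+2}$, and this integral converges exactly when $(2-\sigma)p' > 2$, equivalently $\frac1p > \frac\sigma2$, equivalently $p < \frac2\sigma$; thus the hypothesis $1 < p < \frac2\sigma$ is precisely what places the far kernel in $L^{p'} (\{ |z| \ge R \})$. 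This produces the far bound $C R^{\sigma - 2/p} \| \varphi \|_{L^p (\mathbb{R}^2)}$, whose exponent $\sigma - \frac2p$ is negative.

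I would then optimize over $R$. Balancing $R^\sigma M\varphi (x)$ against $R^{\sigma - 2/p} \| \varphi \|_{L^p (\mathbb{R}^2)}$ by choosing $R \sim ( \| \varphi \|_{L^p (\mathbb{R}^2)} / M\varphi (x) )^{p/2}$ yields Hedberg's pointwise inequality $I_\sigma \varphi (x) \le C ( M\varphi (x) )^{p/p_*} \| \varphi \|_{L^p (\mathbb{R}^2)}^{1 - p/p_*}$, where the exponents fit together by the relation $1 - \frac{\sigma p}{2} = \frac{p}{p_*}$ that is equivalent to the defining identity $\frac1{p_*} = \frac1p - \frac\sigma2$. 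Raising this to the power $p_*$, integrating in $x$, and invoking the Hardy--Littlewood maximal theorem $\| M\varphi \|_{L^p (\mathbb{R}^2)} \le C \| \varphi \|_{L^p (\mathbb{R}^2)}$ (available precisely because $p > 1$) gives $\| I_\sigma \varphi \|_{L^{p_*} (\mathbb{R}^2)}^{p_*} \le C \| \varphi \|_{L^p (\mathbb{R}^2)}^{p_* - p} \| M\varphi \|_{L^p (\mathbb{R}^2)}^{p} \le C \| \varphi \|_{L^p (\mathbb{R}^2)}^{p_*}$, which is the asserted estimate.

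I expect the main subtlety to be the interplay of the two endpoint constraints rather than any single hard step. The lower restriction $p > 1$ is indispensable for the maximal inequality at the last stage, while the upper restriction $p < \frac2\sigma$ is exactly the convergence threshold for the far-field H\"older estimate; neither bound is negotiable, and the optimization in $R$ is what forces the output exponent to equal $\frac1p - \frac\sigma2$. One must also keep careful track that the power of $\| \varphi \|_{L^p (\mathbb{R}^2)}$ arising from the pointwise inequality combines with the $p$th power coming from the maximal inequality to reproduce precisely $\| \varphi \|_{L^p (\mathbb{R}^2)}^{p_*}$, so that the homogeneity of the final bound is correct. Everything else reduces to the routine annular and H\"older computations sketched above.
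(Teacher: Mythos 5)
Your proof is correct. Note that the paper itself does not prove Lemma \ref{HLS}: it is quoted as a classical result with citations to Stein and Ziemer, where essentially the argument you give appears. Your Hedberg-style proof is the standard self-contained route, and every step checks out: the near-field dyadic sum converges geometrically because $\sigma>0$; the far-field H\"older computation requires $(2-\sigma)p'>2$, which is exactly $p<\frac2\sigma$, and yields the negative exponent $\sigma-\frac2p$; the balancing choice $R\sim(\|\varphi\|_{L^p(\mathbb{R}^2)}/M\varphi(x))^{p/2}$ produces the pointwise bound with exponent $1-\frac{\sigma p}2=\frac{p}{p_*}$, matching the scaling relation $\frac1{p_*}=\frac1p-\frac\sigma2$; and the final integration uses the maximal theorem, which is where $p>1$ enters. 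The homogeneity bookkeeping $\|\varphi\|^{p_*-p}\cdot\|M\varphi\|_{L^p}^p\le C\|\varphi\|^{p_*}$ is also right. Two cosmetic points you should make explicit for completeness: the reduction to $\varphi\ge0$ rests on $|I_\sigma\varphi|\le I_\sigma|\varphi|$ pointwise, and the optimization in $R$ presupposes $0<M\varphi(x)<\infty$, which holds almost everywhere for $0\not\equiv\varphi\in L^p(\mathbb{R}^2)$ (the cases $\varphi\equiv0$ or $M\varphi(x)=0$ being trivial); finally, since by \eqref{fracint} the operator $(-\Delta)^{-\sigma/2}$ is $\gamma_\sigma I_\sigma$, the bound for $I_\sigma$ transfers verbatim. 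What your approach buys over a bare citation is a fully elementary proof using only the maximal function, with the two hypotheses $p>1$ and $p<\frac2\sigma$ each pinned to the precise step where it is indispensable, exactly as you observe.
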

We also need the following generalized Gagliardo-Nirenberg inequality.
\begin{lemma}[Gagliardo-Nirenberg inequality \cite{H-Y-Z,K-PP,M-N-S-S}]\label{GN}
	Let $0 < \sigma < s < 2,~ 1 < p_1, p_2 < \infty$ and $\frac1p = (1-\frac\sigma{s}) \frac1{p_1} + \frac\sigma{s} \frac1{p_2}$.
	Then
	\[
		\bigl\| (-\Delta)^{\sigma/2} \varphi \bigr\|_{L^p (\mathbb{R}^2)}
		\le
		C \bigl\| \varphi \bigr\|_{L^{p_1} (\mathbb{R}^2)}^{1-\frac\sigma{s}}
		\bigl\| (-\Delta)^{s/2} \varphi \bigr\|_{L^{p_2} (\mathbb{R}^2)}^{\frac\sigma{s}}
	\]
	holds.
\end{lemma}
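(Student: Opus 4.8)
The plan is to work from the mild (Duhamel) formulation
\[
	\theta(t) = G_\alpha(t)*\theta_0 - \int_0^t\nabla G_\alpha(t-s)*\bigl(\theta\nabla^\bot(-\Delta)^{-1/2}\theta\bigr)(s)\,ds,
\]
and to split the quantity to be estimated as
\[
	|x|^2\bigl(\theta(t)-MG_\alpha(t)\bigr) = |x|^2\bigl(G_\alpha(t)*\theta_0 - MG_\alpha(t)\bigr) - |x|^2\int_0^t\nabla G_\alpha(t-s)*g(s)\,ds,
\]
where $g=\theta\nabla^\bot(-\Delta)^{-1/2}\theta$. The linear term is bounded in $L^2(\mathbb{R}^2)$ by a constant through Lemma \ref{wtasymplin}, so the entire task reduces to the weighted estimate of the Duhamel integral.

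For the nonlinear term I would move the weight onto the convolution using the exact identity $x_k=(x_k-y_k)+y_k$ applied twice, which splits $|x|^2\bigl(\nabla G_\alpha(t-s)*g\bigr)$ into three contributions: (i) a term with both weights on the kernel, $(|\cdot|^2\nabla G_\alpha(t-s))*g$; (ii) a mixed term, schematically $(x_k\partial_jG_\alpha(t-s))*(x_kg_j)$; and (iii) a term carrying both weights on the nonlinearity, $\nabla G_\alpha(t-s)*(|\cdot|^2g)$. The point of this splitting is that, by the scaling \eqref{scG} and the pointwise bound \eqref{decayG}, the weighted kernels satisfy $\||\cdot|^2\nabla G_\alpha(\tau)\|_{L^2(\mathbb{R}^2)}\le C$ and $\||\cdot|\nabla G_\alpha(\tau)\|_{L^1(\mathbb{R}^2)}\le C$ uniformly in $\tau>0$ (the powers of $|x|$ exactly cancel the temporal scaling), whereas $\|\nabla G_\alpha(\tau)\|_{L^1(\mathbb{R}^2)}\le C\tau^{-1/\alpha}$. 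Thus Young's inequality controls (i) and (ii) by $C\|g(s)\|_{L^1(\mathbb{R}^2)}$ and $C\||\cdot|g(s)\|_{L^2(\mathbb{R}^2)}$ respectively, with no temporal singularity; these are then handled by \eqref{mass}, the $L^2$-boundedness of the Riesz transforms, and weighted bounds on $\theta$, and are integrable in $s$.

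The main obstacle is contribution (iii), whose naive Young bound $C\tau^{-1/\alpha}\||\cdot|^2g(s)\|_{L^2(\mathbb{R}^2)}$ carries the singular factor $\tau^{-1/\alpha}=(t-s)^{-1/\alpha}$; since $0<\alpha\le1$ this factor is \emph{not} integrable in $s$ near $s=t$, which is precisely the failure of the parabolic $L^p$--$L^q$ theory noted in the introduction. To remove it I would integrate by parts in the convolution, rewriting the divergence-form term as $G_\alpha(t-s)*\nabla\cdot(|\cdot|^2g)$ and using $\|G_\alpha(\tau)\|_{L^1(\mathbb{R}^2)}=\|G_\alpha(1)\|_{L^1(\mathbb{R}^2)}$ to trade the singular kernel for the conservative one at the cost of one spatial derivative, $\nabla\cdot(|\cdot|^2g)=2x\cdot g+|x|^2\nabla\cdot g$. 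This reduces matters to $\||\cdot|g(s)\|_{L^2(\mathbb{R}^2)}$ (already available) and $\||\cdot|^2\nabla\cdot g(s)\|_{L^2(\mathbb{R}^2)}$; the latter expands by the Leibniz rule into terms carrying $\nabla\theta$ and $\nabla\nabla^\bot(-\Delta)^{-1/2}\theta$, which I would control by \eqref{drbdd}, the Gagliardo--Nirenberg inequality (Lemma \ref{GN}), the Hardy--Littlewood--Sobolev inequality (Lemma \ref{HLS}) for the nonlocal velocity, and weighted bounds on $\theta$.

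Underlying all of this is a set of weighted a priori estimates for the solution itself---bounds of the form $\||\cdot|\theta(t)\|_{L^a(\mathbb{R}^2)}$ and $\||\cdot|^2\theta(t)\|_{L^q(\mathbb{R}^2)}$ with explicit powers of $(1+t)$---which I would establish first by feeding \eqref{mass} and \eqref{drbdd} back into the Duhamel formula under the same weight-splitting; here the hypotheses $|x|\theta_0\in L^1(\mathbb{R}^2)$, $|x|^2\theta_0\in L^2(\mathbb{R}^2)\cap L^q(\mathbb{R}^2)$ and $q>2/\alpha$ enter, the last guaranteeing $\||\cdot|^2G_\alpha(1)\|_{L^q(\mathbb{R}^2)}<+\infty$. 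Finally I would split the time integral as $\int_0^{t/2}+\int_{t/2}^t$: on $[t/2,t]$ the solution factors decay in powers of $(1+t)$ and the integration-by-parts bound above is used, while on $[0,t/2]$ the kernel is smooth with $t-s\simeq t$ and the decay of the weighted nonlinearity is exploited. The logarithmic growth is produced by the borderline time integrals arising at the endpoints of these ranges, and I expect the critical value $\alpha=1$ to be exactly borderline in one additional integration compared with $0<\alpha<1$, accounting for the extra factor $\log(2+t)$ (the power $3/2$ versus $1/2$). The technical heart, and the step I expect to be hardest, is combining the integration-by-parts device with the propagation of the second weighted moment $\||\cdot|^2\theta(t)\|_{L^q(\mathbb{R}^2)}$, since the nonlocal velocity and the slow spatial decay of $G_\alpha$ must be handled simultaneously in both the critical and the supercritical regimes.
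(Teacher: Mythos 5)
Your proposal does not address the statement at hand. The statement is Lemma \ref{GN}, the fractional Gagliardo--Nirenberg interpolation inequality
\[
	\bigl\| (-\Delta)^{\sigma/2} \varphi \bigr\|_{L^p (\mathbb{R}^2)}
	\le
	C \bigl\| \varphi \bigr\|_{L^{p_1} (\mathbb{R}^2)}^{1-\frac\sigma{s}}
	\bigl\| (-\Delta)^{s/2} \varphi \bigr\|_{L^{p_2} (\mathbb{R}^2)}^{\frac\sigma{s}},
\]
a general estimate for a single function $\varphi$ with no reference whatsoever to the quasi-geostrophic equation. What you have written is instead an outline of a proof of Theorem \ref{sun}, the paper's main theorem: the Duhamel formulation, the splitting of the weight $|x|^2$ across the convolution via $x_k=(x_k-y_k)+y_k$, weighted moment bounds for $\theta$, and the treatment of the singular kernel $\nabla G_\alpha(t-s)$ near $s=t$. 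None of this bears on the interpolation inequality: your text never uses the exponent relation $\frac1p=(1-\frac\sigma{s})\frac1{p_1}+\frac\sigma{s}\frac1{p_2}$, never estimates an arbitrary $\varphi$, and offers no mechanism --- Littlewood--Paley decomposition with Bernstein inequalities and optimization over the cutoff frequency, a semigroup or Bochner-type representation of $(-\Delta)^{\sigma/2}$, or interpolation of operators --- by which a multiplicative estimate of this form could be derived. Worse, your sketch explicitly invokes Lemma \ref{GN} as one of its tools (``which I would control by \eqref{drbdd}, the Gagliardo--Nirenberg inequality (Lemma \ref{GN}), \dots''), so even if one tried to reinterpret the proposal as an argument for the lemma, it would be circular.

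For the record, the paper itself does not prove Lemma \ref{GN}: it is quoted from the literature \cite{H-Y-Z,K-PP,M-N-S-S} and used as a black box, e.g.\ in the proof of Proposition \ref{tr} (estimates \eqref{bstr1} and \eqref{bstr1p}) and in the weighted energy estimates of Section 3. If you wish to supply a proof, the standard route is: decompose $\varphi$ into low and high Littlewood--Paley frequencies relative to a threshold $2^{N}$, bound the low-frequency blocks through $\|\varphi\|_{L^{p_1}}$ and the high-frequency blocks through $\|(-\Delta)^{s/2}\varphi\|_{L^{p_2}}$ using Bernstein's inequalities, sum the two geometric series (convergent precisely because $0<\sigma<s$), and optimize over $N$; the stated convexity relation among $p,p_1,p_2$ is exactly what makes the optimized bound take the multiplicative form. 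Your Duhamel and weighted-moment machinery, whatever its merits as a sketch for Theorem \ref{sun}, cannot be repaired into a proof of this lemma.
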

The following estimate is due to \cite{K-P}.
\begin{lemma}[Kato-Ponce's commutator estimates \cite{J04,K-P}]\label{cmm}
	Let $s > 0$ and $1 < p < \infty$.
	Then
	\[
	\begin{split}
		\bigl\| [(-\Delta)^{s/2}, g] f \bigr\|_{L^p (\mathbb{R}^n)}
		\le&
		C \bigl( \| \nabla g \|_{L^{p_1} (\mathbb{R}^n)} \| (-\Delta)^{(s-1)/2} f \|_{L^{p_2} (\mathbb{R}^n)}
		+ \| (-\Delta)^{s/2} g \|_{L^{p_3} (\mathbb{R}^n)} \| f \|_{L^{p_4} (\mathbb{R}^n)} \bigr)
	\end{split}
	\]
	with $1 < p_j \le \infty~ (j = 1,4)$ and $1 < p_j < \infty~ (j = 2,3)$ such that $\frac1p = \frac1{p_1} + \frac1{p_2} = \frac1{p_3} + \frac1{p_4}$, where $[(-\Delta)^{s/2}, g] f = (-\Delta)^{s/2} (gf) - g (-\Delta)^{s/2} f$.
\end{lemma}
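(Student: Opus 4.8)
The statement is the fractional Kato--Ponce commutator estimate, and my plan is to reduce it to the boundedness of bilinear Coifman--Meyer multipliers by means of a paraproduct decomposition. On the Fourier side the commutator is the bilinear Fourier multiplier operator with symbol
\[
	m(\xi,\eta) = |\xi+\eta|^s - |\eta|^s,
\]
in the sense that $\mathcal{F}\bigl[ [(-\Delta)^{s/2},g]f \bigr](\zeta)$ is obtained by integrating $m(\xi,\zeta-\xi)\,\mathcal{F}[g](\xi)\,\mathcal{F}[f](\zeta-\xi)$ against $d\xi$, where $\xi$ and $\eta=\zeta-\xi$ are the frequencies carried by $g$ and $f$. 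The whole inequality is thus an assertion about how $m$ splits into pieces bounded from $L^{p_1}\times L^{p_2}$ and from $L^{p_3}\times L^{p_4}$ into $L^p$.

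First I would fix a Littlewood--Paley partition of unity and split $m$ following Bony's calculus according to the relative sizes of $|\xi|$ and $|\eta|$: a part where $|\xi|$ is much smaller than $|\eta|$ (so $g$ sits at low frequency), and a complementary part where $|\xi|$ is at least comparable to $|\eta|$. On the first region the two terms of the commutator nearly cancel because $(-\Delta)^{s/2}$ effectively sees only the frequency $\eta$; quantitatively, the fundamental theorem of calculus gives
\[
	m(\xi,\eta) = |\xi+\eta|^s - |\eta|^s
	= s \int_0^1 \xi \cdot (\eta+\tau\xi) \, |\eta+\tau\xi|^{s-2} \, d\tau,
\]
and since $|\eta+\tau\xi|$ is comparable to $|\eta|$ here, the integral has size of order $|\xi|\,|\eta|^{s-1}$ with all further derivatives obeying Coifman--Meyer bounds. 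Extracting $\xi$ as $\nabla g$ and $|\eta|^{s-1}$ as $(-\Delta)^{(s-1)/2}f$ leaves a multiplier of Coifman--Meyer type, bounded $L^{p_1}\times L^{p_2}\to L^p$; this produces the term $\|\nabla g\|_{L^{p_1}}\|(-\Delta)^{(s-1)/2}f\|_{L^{p_2}}$ and is exactly where the gain of one derivative on $g$ originates.

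On the complementary region $g$ carries frequency at least comparable to that of $f$, so no cancellation is available. There I would simply estimate $|m(\xi,\eta)| \le C|\xi|^s$, using $|\xi+\eta|\le C|\xi|$ on these frequencies, and factor out $|\xi|^s$ as $(-\Delta)^{s/2}g$ while leaving $f$ undifferentiated. The reduced symbol $m(\xi,\eta)/|\xi|^s$ is again of Coifman--Meyer type, so the Coifman--Meyer theorem gives the $L^{p_3}\times L^{p_4}\to L^p$ bound and hence the term $\|(-\Delta)^{s/2}g\|_{L^{p_3}}\|f\|_{L^{p_4}}$.

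The main obstacle lies in two places. In the resonant part of the complementary region, where $|\xi|$ and $|\eta|$ are comparable, the output frequency $\xi+\eta$ can be small and $|\xi+\eta|^s$ fails to be smooth at the origin when $s$ is not an even integer; I would absorb this by inserting a further Littlewood--Paley decomposition in the output variable and summing the almost orthogonal output blocks, which is the standard device for closing the diagonal sum in Bony's calculus. The second difficulty is the admission of the endpoints $p_1=\infty$ and $p_4=\infty$: the plain Coifman--Meyer theorem does not reach these, so one must invoke the $L^\infty$ (respectively $\mathrm{BMO}$) endpoint of the bilinear multiplier theory, placing the low-frequency factor in $L^\infty$ and the output in $L^p$, which is the technically heaviest part of the argument.
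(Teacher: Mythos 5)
The paper offers no proof of this lemma at all: it is imported verbatim from the literature, with the one-line attribution ``The following estimate is due to \cite{K-P}'' (the homogeneous, multi-exponent form stated here is the refinement recorded in \cite{J04} and later works, the original Kato--Ponce result being the inhomogeneous $J^s=(1-\Delta)^{s/2}$ estimate with the fixed exponent pattern $\|\nabla g\|_{L^\infty}\|J^{s-1}f\|_{L^p}+\|J^s g\|_{L^p}\|f\|_{L^\infty}$). So there is nothing in the paper to match your argument against; judged on its own, your sketch is the standard modern paraproduct proof and it is sound. The symbol identification $m(\xi,\eta)=|\xi+\eta|^s-|\eta|^s$ is correct, the fundamental-theorem-of-calculus factorization on the cone $|\xi|\ll|\eta|$ is exactly where the gain of one derivative on $g$ comes from (and is legitimate there since $|\eta+\tau\xi|\sim|\eta|$ keeps the symbol away from the singularity even when $s<2$), and the crude bound $|m|\le C|\xi|^s$ on the complementary cone is right. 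You also correctly identify the two genuine difficulties: the non-smoothness of $|\xi+\eta|^s$ at vanishing output frequency in the resonant piece, where your extra output Littlewood--Paley decomposition works because the factor $|\xi+\eta|^s/|\xi|^s\sim 2^{(k-j)s}$ sums geometrically precisely when $s>0$ and the output blocks can be recombined by a square function since $p>1$; and the endpoints $p_1=\infty$, $p_4=\infty$, which require the $L^\infty\times L^p\to L^p$ endpoint of bilinear Calder\'on--Zygmund theory rather than the plain Coifman--Meyer statement. One remark worth making: in the only place the paper applies the lemma (the proof of Proposition \ref{tr}, with $p=2_*'$, $p_2=p_3=2_*$, $p_1=p_4=2/\alpha$), all four exponents are finite, so for the purposes of this paper the heaviest part of your argument --- the endpoint extension --- is never actually needed, and the plain Coifman--Meyer theorem together with your two-cone decomposition would suffice.
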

By using those inequalities, we firstly confirm \eqref{drbdd}.
\begin{proposition}\label{tr}
Let $\sigma > 2,~ \theta_0 \in H^\sigma (\mathbb{R}^2)$ and $\| \theta_0 \|_{H^\sigma (\mathbb{R}^2)}$ be small.
Assume that the solution $\theta$ of \eqref{qg} satisfies \eqref{exist} and \eqref{mass}.
Then \eqref{drbdd} holds.
\end{proposition}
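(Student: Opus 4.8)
The plan is to run an $L^2$ energy estimate on the top-order derivative $(-\Delta)^{\sigma/2}\theta$ and then convert the resulting energy--dissipation inequality into the stated algebraic rate by Schonbek's Fourier-splitting method. Throughout write $\Lambda=(-\Delta)^{1/2}$ and $u=\nabla^\bot\psi=\nabla^\bot\Lambda^{-1}\theta=(-R_2\theta,R_1\theta)$; since $\nabla\cdot u=0$ the nonlinear term is $\nabla\cdot(\theta\nabla^\bot\psi)=u\cdot\nabla\theta$. Applying $\Lambda^\sigma$ to the first equation of \eqref{qg}, pairing with $\Lambda^\sigma\theta$ in $L^2(\mathbb{R}^2)$, and using that $\int_{\mathbb{R}^2}(u\cdot\nabla\Lambda^\sigma\theta)\Lambda^\sigma\theta\,dx=-\tfrac12\int_{\mathbb{R}^2}(\nabla\cdot u)|\Lambda^\sigma\theta|^2\,dx=0$, I obtain the identity
\[
\frac12\frac{d}{dt}\|\Lambda^\sigma\theta\|_{L^2}^2+\|\Lambda^{\sigma+\alpha/2}\theta\|_{L^2}^2=-\int_{\mathbb{R}^2}\bigl[\Lambda^\sigma,u\cdot\nabla\bigr]\theta\,\Lambda^\sigma\theta\,dx,
\]
so that only a commutator survives on the right.

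The first step is to bound this commutator. By Cauchy--Schwarz it suffices to estimate $\|[\Lambda^\sigma,u_j]\partial_j\theta\|_{L^2}$, for which Lemma \ref{cmm} (with $p=2$) gives
\[
\bigl\|[\Lambda^\sigma,u_j]\partial_j\theta\bigr\|_{L^2}\le C\bigl(\|\nabla u_j\|_{L^{p_1}}\|\Lambda^{\sigma-1}\partial_j\theta\|_{L^{p_2}}+\|\Lambda^\sigma u_j\|_{L^{p_3}}\|\partial_j\theta\|_{L^{p_4}}\bigr).
\]
Because the Riesz transforms are bounded on $L^p$ for $1<p<\infty$, every norm of $u$ here is controlled by the corresponding norm of $\theta$, and the right-hand side is dominated by products of $\|\nabla\theta\|_{L^{p}}$ and $\|\Lambda^\sigma\theta\|_{L^{p}}$. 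I would then interpolate each factor, via Lemma \ref{GN}, between $\|\theta\|_{L^2}$ (whose decay is known) and the dissipation norm $\|\Lambda^{\sigma+\alpha/2}\theta\|_{L^2}$, choosing the exponents $p_j$ so that the total power of the dissipation norm is at most $2$. Young's inequality together with the smallness of $\|\theta_0\|_{H^\sigma}$---which propagates to a global-in-time smallness of $\|\Lambda^\sigma\theta(t)\|_{L^2}$---then lets me absorb a small multiple of $\|\Lambda^{\sigma+\alpha/2}\theta\|_{L^2}^2$ into the left-hand side, leaving a remainder that decays like a negative power of $1+t$. Here $\sigma>2$ enters through the embedding $H^\sigma(\mathbb{R}^2)\hookrightarrow C^1$, which keeps $\nabla\theta$ bounded and makes the interpolation exponents admissible.

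For the decay rate I would first record, from \eqref{mass} and $\theta\ge0$, that $\|\theta(t)\|_{L^2}^2\le\|\theta(t)\|_{L^\infty}\|\theta(t)\|_{L^1}\le CM(1+t)^{-2/\alpha}$ and that $\|\mathcal{F}[\theta(t)]\|_{L^\infty}\le(2\pi)^{-1}M$. Writing the inequality from the previous paragraphs as $\frac{d}{dt}\|\Lambda^\sigma\theta\|_{L^2}^2+\|\Lambda^{\sigma+\alpha/2}\theta\|_{L^2}^2\le R(t)$ with $R$ controlled as above, I split the dissipation at frequency $\rho(t)$ with $\rho(t)^\alpha=k(1+t)^{-1}$: on $\{|\xi|>\rho\}$ one has $|\xi|^{2\sigma+\alpha}\ge\rho^\alpha|\xi|^{2\sigma}$, while on $\{|\xi|\le\rho\}$ the low-frequency mass is bounded by $C\rho^{2\sigma+2}\|\mathcal{F}[\theta]\|_{L^\infty}^2\le CM^2\rho^{2\sigma+2}$. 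This yields
\[
\frac{d}{dt}\|\Lambda^\sigma\theta\|_{L^2}^2+2\rho(t)^\alpha\|\Lambda^\sigma\theta\|_{L^2}^2\le CM^2\rho(t)^{2\sigma+\alpha+2}+R(t).
\]
Multiplying by the integrating factor $(1+t)^{2k}$, choosing $k$ large, and integrating gives $\|\Lambda^\sigma\theta\|_{L^2}^2\le C(1+t)^{-(2\sigma+2)/\alpha}$, which is exactly \eqref{drbdd}; the exponent is correct because $-(2\sigma+\alpha+2)/\alpha+1=-(2\sigma+2)/\alpha$.

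The main obstacle is the nonlinear closure in the supercritical range $0<\alpha<1$ (and at the borderline $\alpha=1$), where the dissipation supplies only $\alpha/2$ derivatives on each side of the inner product while the commutator inevitably contains a full first-order derivative of $\theta$. The interpolation of the second paragraph is therefore tight, and one cannot control the top-order contribution by the dissipation alone: it is essential to spend the propagated smallness of the solution to make the absorbing constant strictly smaller than the available dissipation, and also to verify that the leftover remainder $R(t)$ decays fast enough that its integral against $(1+t)^{2k}$ does not spoil the sharp rate $(1+t)^{-(\sigma+1)/\alpha}$. Reconciling these two requirements---small enough coefficient and fast enough remainder---simultaneously for all $0<\alpha\le1$ is the delicate point, and is most cleanly organized as a continuity/bootstrap argument on the time-weighted norm $\sup_{0\le s\le t}(1+s)^{(\sigma+1)/\alpha}\|\Lambda^\sigma\theta(s)\|_{L^2}$.
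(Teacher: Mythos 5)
Your proposal is correct in its overall architecture, but it follows a genuinely different route from the paper's, so let me compare them. Both proofs start from the same $\dot H^\sigma$ energy identity, both treat the commutator with Lemma \ref{cmm}, and both ultimately spend the propagated smallness of $\| \theta (t) \|_{H^\sigma (\mathbb{R}^2)}$ (note that this propagation is not free: it is Ju's global existence theorem, cited as \cite{J04} in the paper, and your phrase ``which propagates'' silently invokes it). The divergence is in how the commutator is absorbed and how the energy--dissipation inequality is converted into a rate. The paper picks the Kato--Ponce exponents $p = 2_*'$, $p_2 = p_3 = 2_*$, $p_1 = p_4 = 2/\alpha$ (where $\frac1{2_*} = \frac12 - \frac\alpha4$) and applies Hardy--Littlewood--Sobolev, so the commutator is bounded by $\| (-\Delta)^{1-\frac\alpha2} \theta \|_{L^2} \| (-\Delta)^{\frac\varsigma2+\frac\alpha4} \theta \|_{L^2}^2$: a uniformly small prefactor times the dissipation to the power \emph{exactly} two, so the nonlinearity is absorbed with no remainder at all; the only remainder comes from the time-weight term, which the paper handles by an induction on the derivative order ($\varsigma < 2-\alpha/2$, then $\varsigma < 4-\alpha$, and so on up to $\sigma$), each stage feeding on the decay proved at the previous one. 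You instead take $p=2$ in Kato--Ponce, interpolate the two resulting factors between $\| \theta \|_{L^2}$ and $\| (-\Delta)^{\frac\sigma2+\frac\alpha4} \theta \|_{L^2}$, and extract the rate by Schonbek's Fourier splitting in one pass at top order. This buys a shorter, induction-free argument, but it is lossier at the commutator: scaling forces the total dissipation power of your two factors to equal $y = \frac{\sigma+2}{\sigma+\alpha/2}$ (your ``at most $2$'' is achievable only because $\sigma > 2 > 2-\alpha$; had you also interpolated the Cauchy--Schwarz factor $\| (-\Delta)^{\sigma/2}\theta \|_{L^2}$ down to $\| \theta \|_{L^2}$, the total power would be $\frac{2\sigma+2}{\sigma+\alpha/2} > 2$ for every $\alpha < 2$ and the absorption would be impossible). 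Consequently Young's inequality leaves a remainder, and with smallness alone that remainder decays only like $(1+t)^{-2/\alpha}$, far too slowly for the target rate $(1+t)^{-(2\sigma+2)/\alpha}$. Hence the bootstrap on $X(t) = \sup_{0\le s\le t} (1+s)^{(\sigma+1)/\alpha} \| (-\Delta)^{\sigma/2}\theta (s) \|_{L^2}$, which you correctly flag as the delicate point, is not a refinement but the load-bearing step, and you leave it unverified. It does close: inserting $\| (-\Delta)^{\sigma/2}\theta (s) \|_{L^2} \le X(t) (1+s)^{-(\sigma+1)/\alpha}$ and $\| \theta (s) \|_{L^2} \le C (1+s)^{-1/\alpha}$ and applying Young, the remainder decays with exponent $E = \frac{2(\sigma+3-y)}{(2-y)\alpha}$, and one checks that $E - \frac{2\sigma+2+\alpha}{\alpha} = \frac{(2\sigma+\alpha)(3-\alpha)}{(\sigma+\alpha-2)\alpha} > 0$ for all $0 < \alpha \le 1$ and $\sigma > 2$, so $\int_0^t (1+s)^{2k} R(s)\, ds$ does not spoil the rate; the resulting inequality $X(t)^2 \le C + C X(t)^{2/(2-y)}$, whose exponent $\frac{2}{2-y} = \frac{2\sigma+\alpha}{\sigma+\alpha-2}$ exceeds $2$, then closes by a continuity argument that spends the propagated smallness once more to tame the superquadratic term. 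So your route is viable, but only after this verification; the paper's choice of H\"older exponents exists precisely to avoid the entire self-consistency issue.
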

\begin{proof}
We trace the proof of \cite[Theorem 3.1]{J04}.
Similar argument as in \cite{Y-S-NA} leads that
\begin{equation}\label{bstr}
\begin{split}
	&\frac12 (1+t)^{2\gamma} \bigl\| (-\Delta)^{\varsigma/2} \theta \bigr\|_{L^2 (\mathbb{R}^2)}^2
	+
	\int_0^t
		(1+s)^{2\gamma} \bigl\| (-\Delta)^{\frac\varsigma{2}+\frac\alpha{4}} \theta \bigr\|_{L^2 (\mathbb{R}^2)}^2
	ds\\
	&=
	\frac12  \bigl\| (-\Delta)^{\varsigma/2} \theta_0 \bigr\|_{L^2 (\mathbb{R}^2)}^2
	+
	\gamma \int_0^t (1+s)^{2\gamma-1} \bigl\| (-\Delta)^{\varsigma/2} \theta \bigr\|_{L^2 (\mathbb{R}^2)}^2 ds\\
	&-
	\int_0^t
		(1+s)^{2\gamma}
		\int_{\mathbb{R}^2}
			(-\Delta)^{\varsigma/2} \theta \bigl[ (-\Delta)^{\varsigma/2}, \nabla^\bot \psi \bigr] \cdot \nabla \theta
		dy
	ds
\end{split}
\end{equation}
for $0 < \varsigma \le \sigma$ and $\gamma > \frac1\alpha+\frac\sigma\alpha$.
For the last term, we apply Lemma \ref{cmm} with $p = 2_*'$ and $p_2 = p_3 = 2_*$, where $\frac1{2_*} = \frac12 - \frac\alpha{4}$, and $p_1 = p_4 = 2/\alpha$, and Lemma \ref{HLS}, then
\begin{equation}\label{bstr2}
\begin{split}
	&\biggl|
		\int_{\mathbb{R}^2}
			(-\Delta)^{\varsigma/2} \theta \bigl[ (-\Delta)^{\varsigma/2}, \nabla^\bot \psi \bigr] \cdot \nabla \theta
		dy
	\biggr|
	\le
	C \bigl\| \nabla\theta \bigr\|_{L^{2/\alpha} (\mathbb{R}^2)}
	\bigl\| (-\Delta)^{\varsigma/2} \theta \bigr\|_{L^{2_*} (\mathbb{R}^2)}^2\\
	&\le
	C \bigl\| (-\Delta)^{1-\frac\alpha{2}} \theta \bigr\|_{L^2 (\mathbb{R}^2)}
	\bigl\| (-\Delta)^{\frac\varsigma{2}+\frac\alpha{4}} \theta \bigr\|_{L^2 (\mathbb{R}^2)}^2.
\end{split}
\end{equation}
Here $\| (-\Delta)^{1-\frac\alpha{2}} \theta \|_{L^2 (\mathbb{R}^2)}$ is bounded by $\| \theta_0 \|_{H^\sigma (\mathbb{R}^2)}$ (cf. \cite{J04}) which is small.
By Lemma \ref{GN} and the Young inequality, we see for the second term of \eqref{bstr} that
\begin{equation}\label{bstr1}
\begin{split}
	&\gamma \int_0^t (1+s)^{2\gamma-1} \bigl\| (-\Delta)^{\varsigma/2} \theta \bigr\|_{L^2 (\mathbb{R}^2)}^2 ds\\
	&\le
	C_\delta \int_0^t
		(1+s)^{ 2\gamma - 1 - \frac{2\varsigma}\alpha} \bigl\| \theta \bigr\|_{L^2 (\mathbb{R}^2)}^2
	ds
	+
	\delta \int_0^t
		(1+s)^{2\gamma}  \bigl\| (-\Delta)^{\frac\varsigma{2}+\frac\alpha{4}} \theta \bigr\|_{L^2 (\mathbb{R}^2)}^2
	ds
\end{split}
\end{equation}
for any small $\delta > 0$.
We see from \eqref{mass} that the first term of this inequality is bounded by $(1+t)^{2\gamma-\frac2\alpha-\frac{2\varsigma}\alpha}$.
The estimate \eqref{bstr1} is guaranteed if $\varsigma < 2 - \alpha/2$.
Hence, by applying \eqref{bstr2} and \eqref{bstr1} into \eqref{bstr}, we see \eqref{drbdd} with $\varsigma < 2 - \alpha/2$ instead of $\sigma$.
Choosing $\varsigma < 4 - \alpha$, we have from Lemma \ref{GN} that
\begin{equation}\label{bstr1p}
\begin{split}
	&\gamma \int_0^t (1+s)^{2\gamma-1} \bigl\| (-\Delta)^{\varsigma/2} \theta \bigr\|_{L^2 (\mathbb{R}^2)}^2 ds\\
	&\le
	C_\delta \int_0^t
		(1+s)^{2\gamma - 1 - \frac{2\varsigma}\alpha + \frac{2\varsigma_1}\alpha} \bigl\| (-\Delta)^{\varsigma_1/2} \theta \bigr\|_{L^2 (\mathbb{R}^2)}^2
	ds
	+
	\delta \int_0^t
		(1+s)^{2\gamma}  \bigl\| (-\Delta)^{\frac\varsigma{2}+\frac\alpha{4}} \theta \bigr\|_{L^2 (\mathbb{R}^2)}^2
	ds
\end{split}
\end{equation}
for some $\varsigma_1 < 2 - \alpha/2$.
Here \eqref{drbdd} with $\varsigma_1$ instead of $\sigma$ yields that the first term of this inequality is bounded by $(1+t)^{2\gamma - \frac2\alpha-\frac{2\varsigma}\alpha}$.
Thus \eqref{bstr} together with \eqref{bstr2} and \eqref{bstr1p} gives \eqref{drbdd} with $\varsigma < 4 - \alpha$ instead of $\sigma$.
By repeating this procedure, we can choose $\varsigma = \sigma$ and conclude the proof.
\end{proof}
For this estimate, suitable conditions for the initial-data are discussed in \cite{J04}.
The solution of \eqref{qg} is included in the weighted Lebesgue spaces.
\begin{proposition}\label{prop-wt}
Let $q > 2/\alpha$ and $|x|^2 \theta_0 \in L^q (\mathbb{R}^2)$.
Assume that the solution $\theta$ of \eqref{qg} satiefies \eqref{exist} and \eqref{mass}.
Then
\[
	\bigl\| |x|^2 \theta (t) \bigr\|_{L^q (\mathbb{R}^2)}
	\le
	C (1+t)^{\frac{2}{\alpha q}}
\]
for $t > 0$.
\end{proposition}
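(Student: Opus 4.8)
The plan is to abandon the Duhamel/mild-solution representation and argue instead by a weighted $L^q$ energy estimate. As the authors observe, for $0<\alpha\le1$ the kernel $\nabla G_\alpha(t-s)$ is too singular at $s=t$ (one checks $\|\nabla G_\alpha(t-s)\|_{L^1}\sim(t-s)^{-1/\alpha}$, which is non-integrable in $s$), so the $L^p$–$L^q$ scheme is unavailable. I would therefore set $J(t)=\||x|^2\theta(t)\|_{L^q(\mathbb{R}^2)}^q=\int_{\mathbb{R}^2}|x|^{2q}|\theta|^q\,dx$ and differentiate in time. Using $\nabla\cdot\nabla^\bot\psi=0$ to write $\nabla\cdot(\theta\nabla^\bot\psi)=\nabla^\bot\psi\cdot\nabla\theta$, one gets
\[
\frac1q\frac{d}{dt}J = -\int_{\mathbb{R}^2}|x|^{2q}|\theta|^{q-2}\theta(-\Delta)^{\alpha/2}\theta\,dx - \int_{\mathbb{R}^2}|x|^{2q}|\theta|^{q-2}\theta\,\nabla^\bot\psi\cdot\nabla\theta\,dx =: -D + N,
\]
where $N$ denotes minus the second integral. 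The whole matter reduces to bounding $D$ from below and $N$ from above so as to close a differential inequality for $J$.

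For the transport part $N$, I would replace $|\theta|^{q-2}\theta\nabla\theta$ by $q^{-1}\nabla(|\theta|^q)$ and integrate by parts; since $\nabla^\bot\psi$ is divergence free this produces $N=2\int_{\mathbb{R}^2}|x|^{2q-2}(x\cdot\nabla^\bot\psi)|\theta|^q\,dx$, a term carrying one power of weight less. Writing $|x|^{2q-2}|\theta|^q=(|x|^2|\theta|)^{q-1}|\theta|$ and applying H\"older with exponents $(q/(q-1),q)$ gives $|N|\le C\||x|^2\theta\|_{L^q}^{q-1}\||x|\,\theta\,\nabla^\bot\psi\|_{L^q}$; a further H\"older split together with the elementary interpolation $\||x|\theta\|_{L^q}\le\||x|^2\theta\|_{L^q}^{1/2}\|\theta\|_{L^q}^{1/2}$ and the bound $\|\nabla^\bot\psi\|_{L^\infty}\le C$ (since $\nabla^\bot\psi$ is a Riesz transform of $\theta$, controlled via the $H^\sigma$-regularity and \eqref{mass}) leads to $|N|\le CJ^{1-\frac1{2q}}(1+t)^{-\frac1\alpha(3-\frac1q)}$, where I also invoke the interpolated decay $\|\theta(t)\|_{L^q}\le C(1+t)^{-\frac2\alpha(1-\frac1q)}$ that follows from \eqref{mass}. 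This contribution will turn out to be subdominant.

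The dissipative term $D$ is the crux, and it is here that the claimed growth is generated. I would use the pointwise C\'ordoba–C\'ordoba inequality (the pointwise counterpart of Lemma \ref{lemC-C}), $|\theta|^{q-2}\theta(-\Delta)^{\alpha/2}\theta\ge q^{-1}(-\Delta)^{\alpha/2}(|\theta|^q)$, multiply by the nonnegative weight $|x|^{2q}$, integrate, and transfer the fractional Laplacian onto the weight by self-adjointness, obtaining $-D\le -q^{-1}\int(-\Delta)^{\alpha/2}(|x|^{2q})\,|\theta|^q\,dx$. Since $|x|^{2q}$ is homogeneous of degree $2q$ and smooth off the origin, $(-\Delta)^{\alpha/2}(|x|^{2q})$ is homogeneous of degree $2q-\alpha$, so $|(-\Delta)^{\alpha/2}(|x|^{2q})|\le C|x|^{2q-\alpha}$, whence $-D\le C\int|x|^{2q-\alpha}|\theta|^q\,dx$. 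The key interpolation $\int|x|^{2q-\alpha}|\theta|^q\,dx\le(\int|x|^{2q}|\theta|^q)^{1-\frac\alpha{2q}}(\int|\theta|^q)^{\frac\alpha{2q}}=J^{1-\frac\alpha{2q}}\|\theta\|_{L^q}^{\alpha/2}$, combined with $\|\theta\|_{L^q}^{\alpha/2}\le C(1+t)^{-(1-\frac1q)}$, then gives $-D\le CJ^{1-\frac\alpha{2q}}(1+t)^{-(1-\frac1q)}$. The hard part will be making these steps rigorous: the pointwise C\'ordoba inequality needs the regularity furnished by $\theta_0\in H^\sigma$, and the self-adjoint transfer onto the non-decaying weight $|x|^{2q}$ (whose fractional Laplacian is only a homogeneous distribution) must be justified by first truncating or smoothing the weight, controlling the resulting commutator through Lemma \ref{cmm}, and passing to the limit; the same truncation, together with the hypothesis $|x|^2\theta_0\in L^q$, is what secures a priori that $J(t)<+\infty$ so that the energy identity is legitimate.

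Collecting both estimates yields $\frac{d}{dt}J\le CJ^{1-\frac\alpha{2q}}(1+t)^{-(1-\frac1q)}+CJ^{1-\frac1{2q}}(1+t)^{-\frac1\alpha(3-\frac1q)}$, in which the first term dominates. Setting $\kappa=\frac\alpha{2q}$, this gives $\frac{d}{dt}(J^{\kappa})\le C(1+t)^{-(1-\frac1q)}+(\text{integrable remainder})$, so integrating in time and using $\int_0^t(1+s)^{-(1-\frac1q)}\,ds\le q(1+t)^{1/q}$ together with $J(0)=\||x|^2\theta_0\|_{L^q}^q<\infty$ produces $J(t)^{\kappa}\le C(1+t)^{1/q}$, i.e. $J(t)\le C(1+t)^{2/\alpha}$. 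Taking the $q$-th root gives $\||x|^2\theta(t)\|_{L^q}\le C(1+t)^{2/(\alpha q)}$, as asserted; this exponent is exactly the one dictated by the scaling $\||x|^2G_\alpha(t)\|_{L^q}=t^{2/(\alpha q)}\||x|^2G_\alpha(1)\|_{L^q}$, the right-hand norm being finite precisely because $q>2/\alpha$.
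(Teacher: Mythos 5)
Your overall scheme (abandon Duhamel, do a weighted $L^q$ energy estimate, kill the transport term by the divergence-free structure, interpolate against the decay of $\|\theta\|_{L^q}$, close a Gr\"onwall inequality whose exponent matches the scaling of $\||x|^2G_\alpha(t)\|_{L^q}$) is the same philosophy as the paper's, and your treatment of $N$ and the final ODE step would be fine. But the handling of the dissipative term $D$ --- which you correctly identify as the crux --- contains a genuine gap that breaks the proof. The chain ``pointwise C\'ordoba--C\'ordoba, multiply by $|x|^{2q}$, integrate, move $(-\Delta)^{\alpha/2}$ onto the weight by self-adjointness, use $|(-\Delta)^{\alpha/2}(|x|^{2q})|\le C|x|^{2q-\alpha}$'' is invalid because $2q>\alpha$ (indeed $q>2/\alpha$ forces $2q>4$). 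The identity $(-\Delta)^{\alpha/2}|x|^{s}=C_{s}|x|^{s-\alpha}$ holds only for $-2<s<\alpha$: when $s\ge\alpha$ the defining singular integral diverges at infinity, and $|x|^{s}$ fails the integrability condition $\int|u(x)|(1+|x|)^{-2-\alpha}dx<\infty$ that is needed to define $(-\Delta)^{\alpha/2}u$ by duality, so ``homogeneous of degree $2q-\alpha$'' is a purely formal statement with no object behind it. Concretely, the pairing you want is already divergent: for nonnegative, nontrivial, decaying $g=|\theta|^q$ one has $(-\Delta)^{\alpha/2}g(x)\approx -c\,\|g\|_{L^1}\,|x|^{-2-\alpha}$ as $|x|\to\infty$ (fat negative tails), hence
\[
\int_{\mathbb{R}^2}|x|^{2q}\,(-\Delta)^{\alpha/2}(|\theta|^q)\,dx=-\infty
\]
since $2q-2-\alpha>-2$. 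The integrated C\'ordoba--C\'ordoba inequality therefore yields only the vacuous statement $D\ge-\infty$, not $-D\le C\int|x|^{2q-\alpha}|\theta|^q\,dx$. Truncation cannot rescue this: for $w_R=\chi(\cdot/R)|x|^{2q}$, nonlocality gives $(-\Delta)^{\alpha/2}w_R(x)\to-\infty$ at every fixed $x$ (the far-field contribution is of size $-cR^{2q-\alpha}$), so there is no limit to pass to; the divergence is structural, not a technicality that a commutator estimate like Lemma \ref{cmm} can absorb.

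The paper avoids exactly this trap by never letting a fractional Laplacian act on a high power of $|x|$. It weights \emph{first}, setting $\Theta=|x|^2\theta$, and runs the $L^q$ energy estimate on $\Theta$: the dissipation appears as $\int\Theta^{q-1}(-\Delta)^{\alpha/2}\Theta\,dx$, which has a favorable sign by Lemma \ref{lemC-C} applied to $\Theta$ itself, while the price is the commutator $[(-\Delta)^{\alpha/2},|x|^2]\theta$. Because the weight is only quadratic, this commutator is computed exactly on the Fourier side ($|x|^2$ corresponds to the \emph{local} operator $-\Delta_\xi$), giving $\alpha(\alpha-1)(-\Delta)^{(\alpha-2)/2}\theta-2\alpha(-\Delta)^{(\alpha-2)/2}\nabla\cdot(x\theta)$: only negative powers of the Laplacian, i.e.\ smoothing Riesz potentials, which are estimated via Hardy--Littlewood--Sobolev (Lemma \ref{HLS}); all remaining weight enters through $\Theta^{q-1}$ in the $L^q$ pairing, never through $(-\Delta)^{\alpha/2}$ of a weight. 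If you want to keep your scheme, it must be restructured along these lines. A secondary, fixable point: your bound $\|\nabla^\bot\psi\|_{L^\infty}\le C$ invokes $H^\sigma$-regularity, which is not among the hypotheses of this proposition (only \eqref{exist} and \eqref{mass} are assumed); the paper instead uses boundedness of the Riesz transforms on $L^{2q}$, $\|\nabla^\bot\psi\|_{L^{2q}}\le C\|\theta\|_{L^{2q}}$, which suffices.
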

\begin{proof}
We put $\Theta (t,x) = |x|^2 \theta (t,x)$, then we see
\[
	\partial_t \Theta + (-\Delta)^{\alpha/2} \Theta + \nabla \cdot (\Theta \nabla^\bot \psi)
	=
	\bigl[ (-\Delta)^{\alpha/2}, |x|^2 \bigr] \theta
	-
	\bigl[ |x|^2, \nabla \bigr] \cdot (\theta \nabla^\bot \psi).
\]
Here
\begin{equation}\label{str}
\begin{split}
	\bigl[ (-\Delta)^{\alpha/2}, |x|^2 \bigr] \theta
	&=
	\mathcal{F}^{-1} \bigl[ \bigl[ |\xi|^\alpha, -\Delta \bigr] \hat{\theta} \bigr]
	=
	\mathcal{F}^{-1} \bigl[ \alpha (\alpha-1) |\xi|^{\alpha-2} \hat{\theta} + 2\alpha |\xi|^{\alpha-2} \xi \cdot \nabla \hat{\theta} \bigr]\\
	&=
	\alpha (\alpha-1) (-\Delta)^{(\alpha-2)/2} \theta - 2\alpha (-\Delta)^{(\alpha -2)/2} \nabla \cdot (x \theta)
\end{split}
\end{equation}
and
\[
\begin{split}
	\bigl[ |x|^2, \nabla \bigr] \cdot (\theta \nabla^\bot \psi)
	&=
	\mathcal{F}^{-1} \bigl[ [ -\Delta,i\xi] \cdot \mathcal{F} [\theta \nabla^\bot \psi] \bigr]
	=
	-2 \mathcal{F}^{-1} \bigl[ i\nabla \cdot \mathcal{F} [\theta \nabla^\bot \psi] \bigr]
	=
	-2 x \cdot (\theta \nabla^\bot \psi).
\end{split}
\]
Hence
\[
\begin{split}
	&\frac1{q} \frac{d}{dt} \| \Theta (t) \|_{L^q (\mathbb{R}^2)}^q
	+
	\frac2{q} \| (-\Delta)^{\alpha/4} (\Theta^{q/2}) \|_{L^2 (\mathbb{R}^2)}^2
	\le
	\alpha (\alpha - 1) \int_{\mathbb{R}^2} \Theta^{q-1} (-\Delta)^{(\alpha-2)/2} \theta dx\\
	&-
	2\alpha \int_{\mathbb{R}^2} \Theta^{q-1} (-\Delta)^{(\alpha-2)/2} \nabla \cdot (x\theta) dx
	+
	2 \int_{\mathbb{R}^2} \Theta^{q-1} x \cdot (\theta \nabla^\bot \psi) dx.
\end{split}
\]
Here we used Lemma \ref{lemC-C} and the fact that $\int_{\mathbb{R}^2} \Theta^{q-1} \nabla \cdot (\Theta\nabla^\bot \psi) dx = 0$.
For $0 < \gamma < \frac2{\alpha q}$, we multiply this inequality by $(1+t)^{-\gamma q}$, integrate over $(0,t)$ and give that
\begin{equation}\label{bswt}
\begin{split}
	&(1+t)^{-\gamma q} \| \Theta (t) \|_{L^q (\mathbb{R}^2)}^q
	+
	\gamma q^2 \int_0^t (1+s)^{-\gamma q - 1} \| \Theta(s) \|_{L^q (\mathbb{R}^2)}^q ds\\
	&+
	2 \int_0^t (1+s)^{-\gamma q} \| (-\Delta)^{\alpha/4} (\Theta^{q/2}) \|_{L^2 (\mathbb{R}^2)}^2 ds\\
	&\le
	\| |x|^2 \theta_0 \|_{L^q (\mathbb{R}^2)}^q
	+
	\alpha (\alpha - 1) q \int_0^t (1+s)^{-\gamma q} \int_{\mathbb{R}^2} \Theta^{q-1} (-\Delta)^{(\alpha-2)/2} \theta dxds\\
	&-
	2\alpha q \int_0^t (1+s)^{-\gamma q} \int_{\mathbb{R}^2} \Theta^{q-1} (-\Delta)^{(\alpha-2)/2} \nabla \cdot (x\theta) dxds
	+
	2q \int_0^t (1+s)^{-\gamma q} \int_{\mathbb{R}^2} \Theta^{q-1} x \cdot (\theta \nabla^\bot \psi) dxds.
\end{split}
\end{equation}
Hardy-Littlewood-Sobolev's inequality with $\sigma = 2-\alpha$ provides that
\[
\begin{split}
	&\int_{\mathbb{R}^2} \Theta^{q-1} (-\Delta)^{(\alpha-2)/2} \theta dx
	\le
	\| \Theta \|_{L^q (\mathbb{R}^2)}^{q-1} \| (-\Delta)^{(\alpha-2)/2} \theta \|_{L^q (\mathbb{R}^2)}
	\le
	C \| \theta \|_{L^{r} (\mathbb{R}^2)} \| \Theta \|_{L^q (\mathbb{R}^2)}^{q-1}\\
	&\le
	C (1+t)^{-1+\frac2{\alpha q}} \| \Theta \|_{L^q (\mathbb{R}^2)}^{q-1},
\end{split}
\]
where $\frac1r = \frac1q + \frac{2-\alpha}2 $.
Thus
\[
\begin{split}
	&\int_0^t (1+s)^{-\gamma q} \int_{\mathbb{R}^2} \Theta^{q-1} (-\Delta)^{(\alpha-2)/2} \theta dxds
	\le
	C \int_0^t
		(1+s)^{-\gamma q -1 + \frac2{\alpha q}} \| \Theta \|_{L^q (\mathbb{R}^2)}^{q-1}
	ds\\
	&\le
	C_\delta \int_0^t
		(1+s)^{-\gamma q - 1 + \frac2\alpha}
	ds
	+
	\delta \int_0^t
		(1+s)^{-\gamma q - 1} \| \Theta \|_{L^q (\mathbb{R}^2)}^q
	ds
\end{split}
\]
for $\delta > 0$.
Similarly, for $\frac1r = \frac1q + \frac12 - \frac{\alpha}2$,
\[
\begin{split}
	&\int_{\mathbb{R}^2} \Theta^{q-1} (-\Delta)^{(\alpha-2)/2} \nabla \cdot (x\theta) dx
	\le
	\| \Theta \|_{L^q (\mathbb{R}^2)}^{q-1} \| (-\Delta)^{(\alpha -2)/2} \nabla \cdot (x\theta) \|_{L^q (\mathbb{R}^2)}
	\le
	C \| x\theta \|_{L^r (\mathbb{R}^2)} \| \Theta \|_{L^q (\mathbb{R}^2)}^{q-1}\\
	&\le
	C \| \theta \|_{L^{rq/(2q-r)} (\mathbb{R}^2)}^{1/2} \| \Theta \|_{L^q (\mathbb{R}^2)}^{q-\frac12}
	\le
	C (1+t)^{-1 + \frac1{\alpha q}} \| \Theta \|_{L^q (\mathbb{R}^2)}^{q-\frac12}.
\end{split}
\]
Therefore
\[
\begin{split}
	&\int_0^t (1+s)^{-\gamma q} \int_{\mathbb{R}^2} \Theta^{q-1} (-\Delta)^{(\alpha-2)/2} \nabla \cdot (x\theta) dxds
	\le
	C \int_0^t (1+s)^{-\gamma q - 1 + \frac1{\alpha q}} \| \Theta \|_{L^q (\mathbb{R}^2)}^{q-\frac12} ds\\
	&\le
	C_\delta \int_0^t
		(1+s)^{-\gamma q - 1 + \frac2\alpha}
	ds
	+
	\delta \int_0^t
		(1+s)^{-\gamma q - 1} \| \Theta \|_{L^q (\mathbb{R}^2)}^q
	ds.
\end{split}
\]
Since $\| \nabla^\bot \psi \|_{L^r (\mathbb{R}^2)} = \| (-R_2 \theta,R_1 \theta) \|_{L^r (\mathbb{R}^2)} \le C \| \theta \|_{L^r (\mathbb{R}^2)}$ for $1 < r < \infty$,
\[
\begin{split}
	&\int_{\mathbb{R}^2} \Theta^{q-1} x \cdot (\theta \nabla^\bot \psi) dxds
	\le
	\| \Theta \|_{L^q (\mathbb{R}^2)}^{q-1} \| x \theta \|_{L^{2q} (\mathbb{R}^2)} \| \nabla^\bot \psi \|_{L^{2q} (\mathbb{R}^2)}
	\le
	C \| \theta \|_{L^\infty (\mathbb{R}^2)}^{1/2} \| \theta \|_{L^{2q} (\mathbb{R}^2)} \| \Theta \|_{L^q (\mathbb{R}^2)}^{q-\frac12}\\
	&\le
	C (1+t)^{-\frac3\alpha + \frac1{\alpha q}} \| \Theta \|_{L^q (\mathbb{R}^2)}^{q-\frac12}.
\end{split}
\]
Hence
\[
\begin{split}
	&\int_0^t (1+s)^{-\gamma q} \int_{\mathbb{R}^2} \Theta^{q-1} x \cdot (\theta \nabla^\bot \psi) dxds
	\le
	C \int_0^t
		(1+s)^{-\gamma q - \frac3\alpha + \frac1{\alpha q}} \| \Theta \|_{L^q (\mathbb{R}^2)}^{q-\frac12}
	ds\\
	&\le
	C_\delta \int_0^t
		(1+s)^{-\gamma q - 1 + \frac2\alpha - \frac{6q}\alpha + 2q}
	ds
	+
	\delta \int_0^t
		(1+s)^{-\gamma q - 1} \| \Theta \|_{L^q (\mathbb{R}^2)}^q
	ds.
\end{split}
\]
By applying those inequalities into \eqref{bswt} and choosing $\delta$ sufficiently small, we conclude the proof.
\end{proof}
This proposition is crucial since $\| |x|^2 G_\alpha (t) \|_{L^q (\mathbb{R}^2)} = t^{\frac2{\alpha q}} \| |x|^2 G_\alpha (1) \|_{L^q (\mathbb{R}^2)}$ and $\| |x|^2 G_\alpha (1) \|_{L^q (\mathbb{R}^2)} < +\infty$.
We see the spatial-decay of the solution of the linear equation as in follows.

\begin{lemma}\label{wtasymplin}
Let $0 < \alpha \le 2$, then
\[
	\bigl\| |x|^2 \left( G_\alpha (t) * \theta_0 - M G_\alpha (t) \right) \bigr\|_{L^2 (\mathbb{R}^2)}
	\le C_\alpha \bigl( \bigl\| |x| \theta_0 \bigr\|_{L^1 (\mathbb{R}^2)} + \bigl\| |x|^2 \theta_0 \bigr\|_{L^2 (\mathbb{R}^2)} \bigr)
\]
holds for $\theta_0 \in L^1 (\mathbb{R}^2)$ with $|x| \theta_0 \in L^1 (\mathbb{R}^2)$ and $|x|^2 \theta_0 \in L^2 (\mathbb{R}^2)$, where $M = \int_{\mathbb{R}^2} \theta_0 (x) dx$.
\end{lemma}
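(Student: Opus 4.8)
The plan is to exploit the cancellation in the difference by writing it as a single integral against $\theta_0$ and splitting the $y$-integration at the scale $|y|=|x|/2$. Since $M=\int_{\mathbb{R}^2}\theta_0\,dy$, I would first record the identity
\[
	\left(G_\alpha(t)*\theta_0\right)(x)-MG_\alpha(t,x)
	=
	\int_{\mathbb{R}^2}\bigl(G_\alpha(t,x-y)-G_\alpha(t,x)\bigr)\theta_0(y)\,dy,
\]
and then decompose the domain into $\{|y|\le|x|/2\}$ and $\{|y|>|x|/2\}$, calling the two resulting functions $I(t,x)$ and $II(t,x)$. Throughout I would use that $G_\alpha\ge0$ with $\|G_\alpha(t)\|_{L^1(\mathbb{R}^2)}=1$, together with the pointwise bound \eqref{decayG} and the scaling \eqref{scG}; the latter is responsible for the $t$-uniformity of the estimate.

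For $I$, on the region $|y|\le|x|/2$ one has $|x-\tau y|\ge|x|/2$ for all $\tau\in[0,1]$, so I would Taylor-expand $G_\alpha(t,x-y)-G_\alpha(t,x)=-y\cdot\int_0^1\nabla G_\alpha(t,x-\tau y)\,d\tau$ and apply Minkowski's integral inequality in $x$. Using $|x|\le2|x-\tau y|$ on this region to replace the weight $|x|^2$ by $|x-\tau y|^2$ and changing variables $z=x-\tau y$, the inner norm becomes $\bigl\||z|^2\nabla G_\alpha(t,z)\bigr\|_{L^2}$, which by \eqref{scG} is independent of $t$ and finite by \eqref{decayG} (the integrand decays like $|z|^{-1-\alpha}$). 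This bounds $\bigl\||x|^2 I(t)\bigr\|_{L^2}$ by $C\,\bigl\||y|\theta_0\bigr\|_{L^1}$.

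For $II$ I would split off the two kernels, $II=II_a-II_b$ with $II_a(t,x)=\int_{|y|>|x|/2}G_\alpha(t,x-y)\theta_0\,dy$ and $II_b(t,x)=G_\alpha(t,x)\int_{|y|>|x|/2}\theta_0\,dy$. On this region $|x|^2\le4|y|^2$, so $|x|^2|II_a|\le4\bigl(G_\alpha(t)*(|\cdot|^2|\theta_0|)\bigr)(x)$ and Young's inequality with $\|G_\alpha(t)\|_{L^1}=1$ gives directly $\bigl\||x|^2 II_a\bigr\|_{L^2}\le4\bigl\||y|^2\theta_0\bigr\|_{L^2}$. The term $II_b$ is the delicate one: the crude bound $|x|^2\le4|y|^2$ fails here because $\int_{|y|>r}|y|^2|\theta_0|\,dy$ need not be finite. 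Instead I would extract decay of the truncated mass: by Cauchy--Schwarz against $\int_{|y|>r}|y|^{-4}\,dy\le C r^{-2}$ and the hypothesis $|y|^2\theta_0\in L^2$, one gets $\bigl|\int_{|y|>|x|/2}\theta_0\,dy\bigr|\le C|x|^{-1}\bigl\||y|^2\theta_0\bigr\|_{L^2}$, whence $|x|^2|II_b|\le C|x|\,G_\alpha(t,x)\bigl\||y|^2\theta_0\bigr\|_{L^2}$; since $\bigl\||x|G_\alpha(t)\bigr\|_{L^2}$ is again $t$-independent by \eqref{scG} and finite by \eqref{decayG}, this yields $\bigl\||x|^2 II_b\bigr\|_{L^2}\le C\bigl\||y|^2\theta_0\bigr\|_{L^2}$.

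Summing the three contributions gives the claim. I expect the main obstacle to be $II_b$: one must recognize that the convolution/Young argument used for $II_a$ is unavailable for the pure product $G_\alpha(t,x)\int_{|y|>|x|/2}\theta_0\,dy$, and that the naive weight transfer diverges, so the decay $\sim|x|^{-1}$ of the tail integral (coming precisely from $|y|^2\theta_0\in L^2$) must be used together with the finiteness of $\bigl\||x|G_\alpha(t)\bigr\|_{L^2}$. The role of \eqref{scG} throughout is to guarantee that the weighted kernel norms do not depend on $t$, which is what produces a bound uniform in $t>0$.
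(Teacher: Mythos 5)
Your proposal is correct and follows essentially the same route as the paper: the same splitting of $G_\alpha(t)*\theta_0 - MG_\alpha(t)$ at the scale $|y|=|x|/2$, the mean-value/Taylor expansion with weight transfer $|x|\le 2|x-\tau y|$ on the inner region, and Young's inequality after the transfer $|x|^2\le 4|y|^2$ on the outer region, all made $t$-uniform by the scaling \eqref{scG} and the decay \eqref{decayG}. The only (harmless) deviation is the term $G_\alpha(t,x)\int_{|y|>|x|/2}\theta_0\,dy$, which the paper bounds by transferring a single power of the weight ($|x|\le 2|y|$) to land on $C\,\bigl\||x|G_\alpha(t)\bigr\|_{L^2(\mathbb{R}^2)}\bigl\||x|\theta_0\bigr\|_{L^1(\mathbb{R}^2)}$, whereas you apply Cauchy--Schwarz against the tail $\int_{|y|>|x|/2}|y|^{-4}\,dy\le C|x|^{-2}$ to land on $C\,\bigl\||x|G_\alpha(t)\bigr\|_{L^2(\mathbb{R}^2)}\bigl\||x|^2\theta_0\bigr\|_{L^2(\mathbb{R}^2)}$; both are valid and rest on the same $t$-independence of $\bigl\||x|G_\alpha(t)\bigr\|_{L^2(\mathbb{R}^2)}$.
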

\begin{proof}
The mean-value theorem yields that
\[
\begin{split}
	G_\alpha (t) * \theta_0 - M G_\alpha (t)
	&=
	\int_{|y| \le |x|/2} \int_0^1
		\nabla G_\alpha (t,x-\lambda y) \cdot (-y) \theta_0 (y)
	d\lambda dy\\
	&+ \int_{|y| > |x|/2}
		\left( G_\alpha (t,x-y) - G_\alpha (t,x) \right) \theta_0 (y)
	dy.
\end{split}
\]
Hence Hausdorff-Young's inequality with \eqref{scG} and \eqref{decayG} gives that
\[
\begin{split}
	&\left\| |x|^2 \left( G_\alpha (t) * \theta_0 - M G_\alpha (t) \right) \right\|_{L^2 (\mathbb{R}^2)}\\
	&\le
	C \left\| |x|^2 \nabla G_\alpha (t) \right\|_{L^2 (\mathbb{R}^2)} \bigl\| |x| \theta_0 \bigr\|_{L^1 (\mathbb{R}^2)}
	+
	C \bigl\| G_\alpha (t) \bigr\|_{L^1 (\mathbb{R}^2)} \bigl\| |x|^2 \theta_0 \bigr\|_{L^2 (\mathbb{R}^2)}
	+
	C \bigl\| |x| G_\alpha (t) \bigr\|_{L^2 (\mathbb{R}^2)} \bigl\| |x| \theta_0 \bigr\|_{L^1 (\mathbb{R}^2)}\\
	&\le
	C
\end{split}
\]
and we coclude the proof.
\end{proof}

\section{Proof of main theorem}
We prove Theorem \ref{sun}.
Put $v = \theta - G_\alpha * \theta_0$, then
\[
	v = -\int_0^t \nabla G_\alpha (t-s)* (\theta \nabla^\bot \psi) (s) ds.
\]
For $1 \le p < \frac2{1-\alpha}$, we choose $r_1$ and $r_2$ such that $1 + \frac1{p} = \frac1{r_1} + \frac1{r_2},~ \frac2\alpha (1-\frac1{r_1}) < 1$ and $1	 \le r_2 < 2$. Moreover, let $\varsigma \le \sigma-1$, then we see from Hausdorff-Young's inequality and Proposition \ref{tr} that
\[
\begin{split}
	&\| (-\Delta)^{\varsigma/2} v \|_{L^p (\mathbb{R}^2)}
	\le
	\int_0^{t/2}
		\bigl\| \nabla (-\Delta)^{\varsigma/2} G_\alpha (t-s) \bigr\|_{L^p (\mathbb{R}^2)}
		\bigl\| \theta \nabla^\bot\psi \bigr\|_{L^1 (\mathbb{R}^2)}
	ds\\
	&+
	\int_{t/2}^t
		\bigl\| G_\alpha (t-s) \bigr\|_{L^{r_1} (\mathbb{R}^2)}
		\bigl\| \nabla (-\Delta)^{\varsigma/2} \cdot (\theta \nabla^\bot \psi ) \bigr\|_{L^{r_2} (\mathbb{R}^2)}
	ds\\
	&\le
	C \int_0^{t/2}
		(t-s)^{-\frac2\alpha (1-\frac1{p}) - \frac1\alpha -\frac{\varsigma}\alpha} (1+s)^{-\frac2\alpha}
	ds
	+
	C \int_{t/2}^t
		(t-s)^{-\frac2\alpha (1-\frac1{r_1})}
		(1+s)^{-\frac2\alpha (1-\frac1{r_2})-\frac3\alpha -\frac\varsigma\alpha}
	ds.
\end{split}
\]
Similarly
\[
	\| (-\Delta)^{\varsigma/2} v \|_{L^p (\mathbb{R}^2)}
	\le
	C \int_0^t
		(t-s)^{-\frac2\alpha (1-\frac1{r_1})}
		(1+s)^{-\frac2\alpha (1-\frac1{r_2})-\frac3\alpha -\frac\varsigma\alpha}
	ds.
\]
Thus
\begin{equation}\label{decay-drv}
	\| (-\Delta)^{\varsigma/2} v \|_{L^p (\mathbb{R}^2)}
	\le
	C (1+t)^{-\frac2\alpha (1-\frac1{p}) - \frac1\alpha -\frac\varsigma\alpha}
\end{equation}
for $1 \le p < \frac2{1-\alpha}$ and $\varsigma \le \sigma - 1$.
It also holds that
\begin{equation}\label{v}
\left\{
\begin{array}{lr}
	\partial_t v + (-\Delta)^{\alpha/2} v + \nabla \cdot (\theta\nabla^\bot\psi) = 0,
	&
	t > 0,~ x \in \mathbb{R}^2,\\
	v (0,x) = 0,
	&
	x \in \mathbb{R}^2.
\end{array}
\right.
\end{equation}
Thus
\begin{equation}\label{bs}
\begin{split}
	&\frac12 \bigl\| |x|^2 v(t) \bigr\|_{L^2 (\mathbb{R}^2)}^2
	+
	\int_0^t
		\bigl\| (-\Delta)^{\alpha/4} (|x|^2 v) \bigr\|_{L^2 (\mathbb{R}^2)}^2
	ds
	=
	4 \int_0^t \int_{\mathbb{R}^2}
		|x|^{2} \theta v x \cdot \nabla^\bot \psi
	dxds\\
	&+
	\int_0^t \int_{\mathbb{R}^2}
		|x|^{4} \theta \nabla v \cdot \nabla^\bot \psi
	dxds
	- \int_0^t \int_{\mathbb{R}^2}
		|x|^2 v \Bigl[ |x|^2, (-\Delta)^{\alpha/2} \Bigr] v
	dxds.
\end{split}
\end{equation}
We see from \eqref{mass} and Proposition \ref{prop-wt} that
\[
\begin{split}
	&\biggl| \int_{\mathbb{R}^2}
		|x|^2 \theta v x \cdot \nabla^\bot \psi
	dx \biggr|
	\le
	C \bigl\| |x| \theta \bigr\|_{L^{2q} (\mathbb{R}^2)} 
	\bigl\| |x|^2 v \bigr\|_{L^{2_*} (\mathbb{R}^2)}
	\bigl\| \nabla^\bot \psi \bigr\|_{L^r (\mathbb{R}^2)}\\
	&\le
	C \bigl\| \theta \bigr\|_{L^\infty (\mathbb{R}^2)}^{1/2}
	\bigl\| |x|^2 \theta \bigr\|_{L^q (\mathbb{R}^2)}^{1/2}
	 \bigl\| \theta \bigr\|_{L^r (\mathbb{R}^2)}
	\bigl\| (-\Delta)^{\alpha/4} (|x|^2 v) \bigr\|_{L^2 (\mathbb{R}^2)}
	\le
	C (1+t)^{- \frac2\alpha + \frac12} \bigl\| (-\Delta)^{\alpha/4} (|x|^2 v) \bigr\|_{L^2 (\mathbb{R}^2)},
\end{split}
\]
where $\frac1{2_*} = \frac12 - \frac\alpha{4}$ and $\frac1r = \frac12 + \frac\alpha{4} - \frac1{2q}$.
Hence
\begin{equation}\label{bs1}
	\biggl| \int_0^t \int_{\mathbb{R}^2}
		|x|^{2} \theta v x \cdot \nabla^\bot \psi
	dxds \biggr|
	\le
	C_\delta
	+
	\delta \int_0^t
		\bigl\| (-\Delta)^{\alpha/4} (|x|^2 v) \bigr\|_{L^2 (\mathbb{R}^2)}^2
	ds
\end{equation}
for $\delta > 0$.
For the second term of \eqref{bs}, we have
\[
\begin{split}
	\biggl| \int_{\mathbb{R}^2}
		|x|^4 \theta \nabla v \cdot \nabla^\bot \psi
	dx \biggr|
	\le
	C \bigl\| |x|^{2} \theta \bigr\|_{L^q (\mathbb{R}^2)}
	\bigl\| |x|^2 \nabla v \cdot \nabla^\bot \psi \bigr\|_{L^{q'} (\mathbb{R}^2)}.
\end{split}
\]
Now $\nabla^\bot \psi = (-R_2\theta, R_1\theta)$ and
\[
\begin{split}
	|x|^2 R_j \varphi
	&=
	\gamma \int_{\mathbb{R}^2}
		\left( \frac{x_j-y_j}{|x-y|} + \frac{2(x-y)\cdot y (x_j-y_j)}{|x-y|^3} + \frac{|y|^2 (x_j-y_j)}{|x-y|^3} \right)
		\varphi (y)
	dy
\end{split}
\]
for any suitable function $\varphi$, where $\gamma = \pi^{-3/2} \Gamma (3/2)$.
Furthermore
\begin{gather*}
	\biggl| \int_{\mathbb{R}^2}
		\frac{x_j-y_j}{|x-y|} \varphi (y)
	dy \biggr|
	\le
	\| \varphi \|_{L^1 (\mathbb{R}^2)},\\
	\biggl\| \int_{\mathbb{R}^2}
		\frac{(x-y)\cdot y (x_j-y_j)}{|x-y|^3} \varphi (y)
	dy \biggr\|_{L^{2q} (\mathbb{R}^2)}
	\le
	C \bigl\| (-\Delta)^{-1/2} (|x| \varphi) \bigr\|_{L^{2q} (\mathbb{R}^2)}
	\le
	C \bigl\| |x| \varphi \bigr\|_{L^r (\mathbb{R}^2)}
\end{gather*}
for $\frac1{r} = \frac12 + \frac1{2q}$, and
\[
	\gamma \int_{\mathbb{R}^2}
		\frac{|y|^2 (x_j-y_j)}{|x-y|^3} \varphi (y)
	dy
	= R_j (|x|^2\varphi).
\]
Thus, for $\frac1{r} = \frac12 + \frac1{2q},~ \frac1{r_1} = 1-\frac3{2q}$ and $\frac1{r_2} = 1 - \frac2q$,
\[
\begin{split}
	\bigl\| |x|^2 \nabla v \cdot \nabla^\bot \psi \bigr\|_{L^{q'} (\mathbb{R}^2)}
	&\le
	C \bigl\| \theta \bigr\|_{L^1 (\mathbb{R}^2)} \bigl\| \nabla v \bigr\|_{L^{q'} (\mathbb{R}^2)}
	+
	C \bigl\| |x| \theta \bigr\|_{L^{r} (\mathbb{R}^2)} \bigl\| \nabla v \bigr\|_{L^{r_1} (\mathbb{R}^2)}\\
	&+
	C \bigl\| |x|^2 \theta \bigr\|_{L^q (\mathbb{R}^2)} \bigl\| \nabla v \bigr\|_{L^{r_2} (\mathbb{R}^2)}.
\end{split}
\]
Employing \eqref{decay-drv}, we obtain $\| \nabla v \|_{L^{q'} (\mathbb{R}^2)} \le C (1+t)^{-\frac2\alpha-\frac2{\alpha q}},~ \| \nabla v \|_{L^{r_1} (\mathbb{R}^2)} \le C (1+t)^{-\frac2{\alpha} -\frac3{\alpha q}}$ and $\| \nabla v \|_{L^{r_2} (\mathbb{R}^2)} \le C (1+t)^{-\frac2\alpha-\frac4{\alpha q}}$.
Therefore
\begin{equation}\label{bs2}
\begin{split}
	\biggl| \int_0^t \int_{\mathbb{R}^2}
		|x|^4 \theta \nabla v \cdot \nabla^\bot \psi
	dxds \biggr|
	\le
	C.
\end{split}
\end{equation}
Since \eqref{str} with $v$ instead of $\theta$ holds,
the last term of \eqref{bs} is estimated by
\[
\begin{split}
	&\biggl|
		\int_{\mathbb{R}^2}
			|x|^2 v \bigl[ |x|^2, (-\Delta)^{\alpha/2} \bigr] v
		dx
	\biggr|
	\le
	\bigl\| |x|^2 v \bigr\|_{L^{2_*} (\mathbb{R}^2)}
	\bigl\| \bigl[ |x|^2, (-\Delta)^{\alpha/2} \bigr] v \bigr\|_{L^{2_*'} (\mathbb{R}^2)}\\
	&\le
	C \bigl\| (-\Delta)^{\alpha/4} (|x|^2 v) \bigr\|_{L^2 (\mathbb{R}^2)}
	\sum_{|\beta| + |\gamma| = 2,~ |\beta| \ge 1}
	\bigl\| (-\Delta)^{\frac\alpha{2} - \frac{|\beta|}2} (x^\gamma v) \bigr\|_{L^{2_*'} (\mathbb{R}^2)},
\end{split}
\]
where $\frac1{2_*} = \frac12 - \frac\alpha{4}$ and then $\frac1{2_*'} = \frac12 + \frac\alpha{4}$.
Moreover there exist constants $b_{\gamma_1,\gamma_2}$ and $b_0$ such that
\[
\begin{split}
	\mathcal{F} [x^\gamma v] (t)
	&=
	\sum_{|\gamma_1|+|\gamma_2| = |\gamma|} b_{\gamma_1,\gamma_2} \int_0^t i\xi (i\nabla)^{\gamma_1} (e^{-(t-s)|\xi|^\alpha})
	\cdot (i\nabla)^{\gamma_2} \mathcal{F} [\theta\nabla^\bot\psi] (s,\xi) ds\\
	&+
	b_0 \int_0^t e^{-(t-s)|\xi|^\alpha} \mathcal{F} [\theta\nabla^\bot\psi] (s,\xi) ds.
\end{split}
\]
Here $b_0 = 0$ when $|\gamma| = 0$.
Thus
\begin{equation}\label{ir}
\begin{split}
	(-\Delta)^{\frac{\alpha-|\beta|}{2}} (x^\gamma v)(t)
	&=
	\sum_{|\gamma_1|+|\gamma_2| = |\gamma|} b_{\gamma_1,\gamma_2}
	\int_0^t
		\nabla (-\Delta)^{\frac{\alpha-|\beta|}{2}} (x^{\gamma_1} G_\alpha) (t-s)* (x^{\gamma_2} \theta\nabla^\bot\psi) (s)
	ds\\
	&+
	b_0 \int_0^t (-\Delta)^{\frac{\alpha-|\beta|}{2}} G_\alpha (t-s)* (\theta\nabla^\bot\psi) (s) ds.
\end{split}
\end{equation}
For some $r_1$ and $r_2$ with $\frac2{3+\alpha-|\beta|-|\gamma_1|} < r_1 < \frac2{3-|\beta|-|\gamma_1|}$ and $1+\frac1{2_*'} = \frac1{r_1} + \frac1{r_2}$, we see  from \eqref{decayG} and \eqref{fracint} that $\nabla (-\Delta)^{\frac{\alpha-|\beta|}{2}} (x^{\gamma_1} G_\alpha) \in L^{2_*'} (\mathbb{R}^2) \cap L^{r_1} (\mathbb{R}^2)$, and obtain by Hausdorff-Young's inequality and Proposition \ref{prop-wt} that
\[
\begin{split}
	&\biggl\| 
		\int_0^t \nabla (-\Delta)^{\frac{\alpha-|\beta|}{2}} (x^{\gamma_1} G_\alpha) (t-s)* (x^{\gamma_2} \theta\nabla^\bot\psi) (s) ds
	\biggr\|_{L^{2_*'} (\mathbb{R}^2)}\\
	&\le
	C \int_0^{t/2}
		\bigl\| \nabla (-\Delta)^{\frac{\alpha-|\beta|}{2}} (x^{\gamma_1} G_\alpha) (t-s) \bigr\|_{L^{2_*'} (\mathbb{R}^2)}
		\bigl\| x^{\gamma_2} (\theta\nabla^\bot\psi) (s) \bigr\|_{L^1 (\mathbb{R}^2)}
	ds\\
	&+
	C \int_{t/2}^t
		\bigl\| \nabla (-\Delta)^{\frac{\alpha-|\beta|}{2}} (x^{\gamma_1} G_\alpha) (t-s) \bigr\|_{L^{r_1} (\mathbb{R}^2)}
		\bigl\| x^{\gamma_2} (\theta\nabla^\bot\psi) (s) \bigr\|_{L^{r_2} (\mathbb{R}^2)}
	ds\\
	&\le
	C \int_0^{t/2}
		(t-s)^{ - \frac2\alpha - \frac12 + \frac{|\beta|}\alpha + \frac{|\gamma_1|}\alpha}
		(1+s)^{-\frac2\alpha + \frac{|\gamma_2|}\alpha}
	ds\\
	&+
	C \int_{t/2}^t
		(t-s)^{-\frac2\alpha (1-\frac1{r_1}) - \frac1\alpha - 1 + \frac{|\beta|}\alpha + \frac{|\gamma_1|}\alpha}
		(1+s)^{-\frac2\alpha (1-\frac1{r_2}) - \frac2\alpha + \frac{|\gamma_2|}\alpha}
	ds.
\end{split}
\]
Here the decay of $\nabla (-\Delta)^{\frac{\alpha-|\beta|}{2}} (x^{\gamma_1} G_\alpha)$ in time has been provided from \eqref{scG}.
The singularity at $t = 0$ is avoided by the similar calculus as above.
Thus
\[
	\biggl\| 
		\int_0^t \nabla (-\Delta)^{\frac{\alpha-|\beta|}{2}} (x^{\gamma_1} G_\alpha) (t-s)* (x^{\gamma_2} \theta\nabla^\bot\psi) (s) ds
	\biggr\|_{L^{2_*'} (\mathbb{R}^2)}
	\le
	C (1+t)^{-1/2} L_\alpha (t),
\]
where
\[
	L_\alpha (t)
	= \left\{
	\begin{array}{lr}
		\log (2+t),	&	\alpha = 1,\\
		1,		&	0 < \alpha < 1.
	\end{array}
	\right.
\]
%
We estimate the second term of \eqref{ir} when $|\gamma| = 1$, i.e., $|\beta|=1$, then the similar argument as above says that
\[
\begin{split}
	&\biggl\|
		\int_0^t (-\Delta)^{\frac{\alpha-|\beta|}{2}} G_\alpha (t-s)* (\theta \nabla^\bot \psi) (s) ds
	\biggr\|_{L^{2_*'} (\mathbb{R}^2)}
	\le
	C (1+t)^{-1/2}.
\end{split}
\]
Therefore
\[
\begin{split}
	\bigl\| \bigl[ |x|^2, (-\Delta)^{\alpha/2} \bigr] v \bigr\|_{L^{2_*'} (\mathbb{R}^2)}
	&\le
	C \sum_{|\beta|+|\gamma|=2,~ |\beta| \ge 1}
	\bigl\| (-\Delta)^{\frac{\alpha-|\beta|}{2}} (x^\gamma v)(t) \bigr\|_{L^{2_*'}  (\mathbb{R}^2)}
	\le
	C (1+t)^{-1/2} L_\alpha (t),
\end{split}
\]
and
\begin{equation}\label{bs3}
\begin{split}
	&\biggl| \int_0^t \int_{\mathbb{R}^2}
		|x|^2 v \Bigl[ |x|^2, (-\Delta)^{\alpha/2} \Bigr] v
	dxds \biggr|
	\le
	C_\delta \log (2+t) L_\alpha (t)^2
	+
	\delta \int_0^t \bigl\| (-\Delta)^{\alpha/4} (|x|^2 v) \bigr\|_{L^2 (\mathbb{R}^2)}^2 ds.
\end{split}
\end{equation}
%
Applying \eqref{bs1}, \eqref{bs2} and \eqref{bs3} into \eqref{bs}, and choosing $\delta$ sufficiently small, we see that
\[
	\bigl\| |x|^2 v \bigr\|_{L^2 (\mathbb{R}^2)}^2
	+
	\int_0^t
		\bigl\| (-\Delta)^{\alpha/4} (|x|^2 v) \bigr\|_{L^2 (\mathbb{R}^2)}^2
	ds
	\le
	C \log (2+t) L_\alpha (t)^2.
\]
A coupling of this estimate and Lemma \ref{wtasymplin} yields the assertion.\hfill$\square$

\end{document}